\documentclass[11pt,reqno]{article}
\usepackage{amsfonts}
\usepackage{amsmath}
\usepackage{amssymb}
\usepackage{epsfig}
\usepackage{amsthm}
\usepackage[english]{babel}
\usepackage[hypertex]{hyperref}
\parindent 0pt
\parskip0.2in
\textwidth 6.3in
\oddsidemargin -.1in
\newcommand{\N}{\mathbb N}

\newcommand{\R}{\mathbb R}

\newcommand{\E}{\mathbb E}
\newcommand{\pr}{\mathbb P}

\newcommand{\hier}[3][X]{#1^{(#2)}_{#3}}

\newcommand{\half}{\frac{1}{2}}
\newcommand{\salj}{\mathcal{F}}
\newcommand{\vol}{\mathop{\rm vol}\nolimits}
\newcommand{\Var}{\mathop{\rm Var}\nolimits}
\newenvironment{centre}{\begin{center}}{\end{center}}
\newcommand{\rmd}{\mathrm{d}}

\begin{document}
\theoremstyle{plain}
\newtheorem{thm}{Theorem}
\newtheorem{lem}[thm]{Lemma}
\newtheorem{cor}[thm]{Corollary}
\newtheorem{prop}[thm]{Proposition}

\title{Randomised reproducing graphs}
\author{Jonathan Jordan}

\maketitle

\begin{abstract}We introduce a model for a growing random graph based on simultaneous reproduction of the vertices.  The model can be thought of as a generalisation of the reproducing graphs of Southwell and Cannings and Bonato et al to allow for a random element, and there are three parameters, $\alpha$, $\beta$ and $\gamma$, which are the probabilities of edges appearing between different types of vertices.  We show that as the probabilities associated with the model vary there are a number of phase transitions, in particular concerning the degree sequence.  If $(1+\alpha)(1+\gamma)<1$ then the degree distribution converges to a stationary distribution, which in most cases has an approximately power law tail with an index which depends on $\alpha$ and $\gamma$.  If $(1+\alpha)(1+\gamma)>1$ then the degree of a typical vertex grows to infinity, and the proportion of vertices having any fixed degree $d$ tends to zero.  We also give some results on the number of edges and on the spectral gap. \\ AMS 2000 Subject Classification: Primary 05C82;
secondary 60G99, 60J10 \\ Key words and phrases: reproducing graphs, random graphs, degree distribution, phase transition \end{abstract}

\section{Introduction}

In this paper we introduce a new model for a growing random graph based on simultaneous reproduction of the vertices in the graph, with edges being formed between the new vertices and each other and between the new vertices and the existing ones according to a random mechanism conditioned on the pattern of edges between the vertices in the previously existing graph.  The model is a generalisation of the models introduced by Southwell and Cannings \cite{rc1,rc3,gamenets} and the Iterated Local Transitivity (ILT) model of \cite{ILT}, introducing stochasticity, which causes the regular structure found in the graphs of \cite{rc1,rc3,gamenets} to be lost, and which may make them more suitable for modelling in areas such as social networks; the authors of \cite{ILT} particularly suggest their model as a model for online social networks, mentioning Facebook and Twitter among other examples.

We will show that our model, which depends on three parameters $\alpha,\beta$ and $\gamma$, which are to be thought of as probabilities, exhibits a number of phase transitions as the parameters vary; for example for some values of the parameters we will show that the degree distribution of the graph converges to a limiting probability distribution, while for other choices of the parameters the degree of a randomly chosen (in an appropriate sense) vertex in $G_n$ can be shown to tend to infinity as $n\to\infty$.  We will also show that for certain choices of the parameter values the model exhibits a power-law-like decay of the degree distribution, which is a property reported for many ``real world'' networks, and is also associated with other random graph models such as preferential attachment.

We start with a graph $G_0$, and form a new graph $G_{n+1}$ by adding a ``child'' vertex for every vertex of
$G_n$.  As in \cite{rc1,rc3,gamenets} we denote the vertices by binary strings, writing $v0$ for the ``child'' of vertex $v\in V(G_n)$ and $v1$ for the continuation of vertex $v$ as a vertex of $G_{n+1}$.  The edges of $G_{n+1}$ are then obtained according to the following mechanism.
For each $n$ define independent (of each other and of the random variables at other stages of the construction) Bernoulli random variables $\hier[a]{n}{\{u,v\}}\sim Ber(\alpha)$ for each unordered pair $\{u,v\}$ of vertices of $G_n$, $\hier[b]{n}{u}\sim Ber(\beta)$ for each vertex in $G_n$, $\hier[c]{n}{(u,v)}\sim Ber(\gamma)$ for each ordered pair $(u,v)$ of vertices of $G_n$, and connect vertices as follows:\begin{description}\item[(a)] $u1$ is connected to $v1$ in $G_{n+1}$ if and only if $u$ and $v$ are connected in $G_n$, that is existing edges are retained, and no further edges are formed between existing vertices.
\item[(b)] $u0$ is connected to $u1$ in $G_{n+1}$ if and only if $\hier[b]{n}{u}=1$, so each child is connected to its parent with probability $\beta$.
\item[(c)] $u0$ is connected to $v1$ in $G_{n+1}$ if and only if $\hier[c]{n}{(u,v)}=1$ and $u$ and $v$ are connected in $G_n$, so each child is connected to each of its parent's neighbours with
probability $\gamma$.
\item[(d)] $u0$ is connected to $v0$ in $G_{n+1}$ if and only of $\hier[a]{n}{\{u,v\}}=1$ and $u$ and $v$ are connected in $G_n$, so each child is connected to each of its
parent's neighbours' children with probability $\alpha$.
\end{description}

The models introduced in \cite{rc1,rc3,gamenets} have $\alpha,\beta,\gamma\in\{0,1\}$, so are deterministic.  Additionally the case where $\alpha=0,\beta=1,\gamma=1$ is the ILT model, introduced in \cite{ILT} as a model for online social networks.  The ILT$(p)$ model introduced in \cite{ILT} as a stochastic generalisation of the ILT model adds extra random edges between the child vertices without regard to whether their parents were connected, and thus cannot be seen as a special case of our model.  In addition as defined in \cite{ILT} the ILT$(p)$ model always has at least the edges found in the basic ILT model, so is not suited to producing relatively sparse graphs.

The model differs from duplication graphs, for example those considered in \cite{biodupe,jorddupe}, in that in those models only one vertex, chosen at random, duplicates at any one time step, whereas in the models considered here and in \cite{ILT,rc1,rc3,gamenets} all vertices simultaneously duplicate.

Our main results concern the degree distribution.  We will deal with the cases where $\beta=0$ and $\beta>0$ separately, as the behaviour of the model when $\beta=0$ is potentially quite different, with large numbers of isolated vertices.

\begin{thm}\label{beta0}
Let $\beta=0$.  Then, if $(1+\gamma)(\alpha+\gamma)\leq 1$, the probability that a randomly chosen vertex in the graph $G_n$ is isolated tends to $1$ as $n\to\infty$, and the proportion of vertices in the graph with degree zero tends to $1$, almost surely.  If $(1+\gamma)(\alpha+\gamma)>1$, then the probability that a randomly chosen vertex in the graph $G_n$ is isolated converges to some value strictly less than $1$.
\end{thm}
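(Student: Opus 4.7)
The plan is to follow a single lineage in the genealogical tree: pick $V_0 \in V(G_0)$ uniformly and, independently of the graph process, let $V_n = V_{n-1}\epsilon_n$ for an i.i.d.\ uniform sequence $\epsilon_1, \epsilon_2, \ldots \in \{0, 1\}$. Then $V_n$ is uniform in $V(G_n)$, and $D_n := \deg_{G_n}(V_n)$ is a Markov chain on $\N$ with $0$ absorbing: because $\beta = 0$, whenever $V_n$ is isolated so are both $V_n 0$ and $V_n 1$ in $G_{n+1}$. Conditional on $D_n = d$, $D_{n+1}$ is distributed as $d + \mathrm{Bin}(d, \gamma)$ with probability $\tfrac12$ (continuation step) and as $\mathrm{Bin}(d, \gamma) + \mathrm{Bin}(d, \alpha)$ with probability $\tfrac12$ (child step, with independent binomials). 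Heuristically, when $D_n$ is large these two options multiply $D_n$ by roughly $(1+\gamma)$ and $(\alpha+\gamma)$, so $n^{-1}\log D_n \to \tfrac12\log[(1+\gamma)(\alpha+\gamma)]$, which is the source of the threshold.

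In the strictly subcritical case $(1+\gamma)(\alpha+\gamma) < 1$ I would run an $L^s$-argument. By Jensen's inequality for $s \in (0, 1)$,
\[
E[D_{n+1}^s \mid D_n] \leq \tfrac{1}{2} D_n^s\bigl[(1+\gamma)^s + (\alpha+\gamma)^s\bigr] =: \rho_s D_n^s.
\]
Since $\rho_0 = 1$ and $\rho_s'(0) = \tfrac12\log[(1+\gamma)(\alpha+\gamma)] < 0$, one has $\rho_s < 1$ for sufficiently small $s > 0$, whence $E[D_n^s] \leq \rho_s^n E[D_0^s] \to 0$. Markov's inequality then gives $\pr(D_n \geq 1) \leq E[D_n^s] \to 0$, yielding the first claim. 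For the almost sure proportion statement, note that each isolated vertex produces two isolated descendants, so the proportion $F_n$ of isolated vertices satisfies $F_{n+1} \geq F_n$ almost surely; combined with $E[F_n] = \pr(D_n = 0) \to 1$ and monotone convergence, this forces $F_n \to 1$ almost surely.

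For the supercritical case $(1+\gamma)(\alpha+\gamma) > 1$ it suffices to exhibit a single state from which the chain survives with positive probability, since the chain is irreducible on $\{1, 2, \ldots\}$ (provided $\alpha, \gamma \in (0,1)$, which must be arranged in the supercritical regime). I would use Chernoff concentration on the binomials: when $D_n$ is large, $D_{n+1}/D_n$ lies within any prescribed tolerance of $(1+\gamma)$ or $(\alpha+\gamma)$ except with probability exponentially small in $D_n$. Starting from a sufficiently large $d_0$, a standard escape argument shows that with positive probability $D_n$ grows geometrically forever; along such a trajectory the one-step absorption probability $\tfrac12[(1-\alpha)(1-\gamma)]^{D_n}$ is summable, so Borel--Cantelli prevents absorption. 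Since $\pr(D_n \geq 1)$ is nonincreasing in $n$, its limit equals this survival probability and is strictly positive.

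The main technical obstacle I anticipate is the critical case $(1+\gamma)(\alpha+\gamma) = 1$, which the theorem bundles into the subcritical conclusion. Here $\rho_s \geq 1$ for every $s \geq 0$, so the crude Jensen bound is useless. My preferred route is to treat $\log D_n$, on the event of non-absorption, as an asymptotically zero-drift random walk with finite variance, deduce recurrence, and conclude that the chain visits the state $1$ infinitely often; from $1$ the chain is absorbed in one step with the positive probability $\tfrac{1}{2}(1-\alpha)(1-\gamma)$ (assuming neither $\alpha$ nor $\gamma$ equals $1$), so absorption follows by a second Borel--Cantelli argument. Turning the random-walk comparison into a rigorous absorption statement, in the presence of the state-dependent jump distribution and the absorbing boundary at $0$, is the delicate part of the plan.
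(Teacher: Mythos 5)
Your chain $D_n$ is exactly the paper's chain $X_n$ specialised to $\beta=0$, and the transition mechanism you describe is correct. Your arguments for the two strict-inequality regimes are sound and genuinely different from the paper's: the $L^s$ contraction $\E[D_{n+1}^s\mid D_n]\le \frac12\bigl[(1+\gamma)^s+(\alpha+\gamma)^s\bigr]D_n^s$ with $s$ small, plus Markov's inequality, correctly gives $\pr(D_n\ge 1)\to 0$ when $(1+\gamma)(\alpha+\gamma)<1$, and your monotonicity-of-$F_n$ argument for the almost sure statement is the same as the paper's. The Chernoff/escape argument in the supercritical case is a standard and workable replacement for what the paper does.

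The genuine gap is the critical case $(1+\gamma)(\alpha+\gamma)=1$, which the theorem places on the ``tends to $1$'' side and which you explicitly leave unexecuted. Your proposed route --- treating $\log D_n$ as an asymptotically zero-drift walk, proving recurrence, and then killing the chain by Borel--Cantelli at visits to state $1$ --- is plausible but is precisely the delicate Lamperti-type analysis you would have to carry out in full (the step distribution is state-dependent, the fluctuations are only $O(D_n^{-1/2})$, and there is an absorbing boundary), so as written the proof of the theorem is incomplete. The observation you are missing, and which the paper uses to dispose of all three regimes at once, is that $D_n$ is literally a Smith--Wilkinson branching process in an i.i.d.\ random environment: the environment is the parent/child coin $\xi_{n+1}$, under which each of the $D_n$ edges at the tracked vertex independently produces $1+\mathrm{Ber}(\gamma)$ offspring (parent step) or $\mathrm{Ber}(\gamma)+\mathrm{Ber}(\alpha)$ offspring (child step). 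The classical extinction criterion for such processes --- almost sure extinction if and only if $\E\log(\text{offspring mean})=\frac12\log\bigl[(1+\gamma)(\alpha+\gamma)\bigr]\le 0$ (modulo degenerate offspring laws) --- covers the critical case by citation and replaces both your $L^s$ argument and your random-walk sketch. If you want a self-contained proof, that reference (or a full execution of your recurrence argument) is what must be supplied to close the critical case.
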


\begin{thm}\label{betanot0}
Assume $\beta>0$, and let $\hier[p]{n}{d}$ be the proportion of vertices in $G_n$ with degree $d$.  Then \begin{description}\item[(a)] If $(1+\gamma)(\alpha+\gamma)< 1$ there exists a random variable $X$ such that $\hier[p]{n}{d}\to P(X=d)$ as $n\to\infty$, almost surely.
\item[(b)] Under the conditions of (a), the random variable $X$ has a finite $p$th moment if $(1+\gamma)^p+(\alpha+\gamma)^p<2$, and does not have a finite $p$th moment if $(1+\gamma)^p+(\alpha+\gamma)^p>2$.
\item[(c)] If $(1+\gamma)(\alpha+\gamma)> 1$ then $\hier[p]{n}{d}\to 0$ as $n\to\infty$, almost surely.
\end{description}
\end{thm}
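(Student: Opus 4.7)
Let $D_n$ denote the degree of a vertex chosen uniformly at random from $V(G_n)$; equivalently, $D_n$ is the degree of the descendant $v_0x_1\cdots x_n$, where $v_0$ is uniform on $V(G_0)$ and $x_1,\ldots,x_n$ are i.i.d.\ uniform bits, sampled independently of the graph construction. By the rules (a)--(d), $(D_n)_{n\ge 0}$ is then a Markov chain on $\mathbb{N}$: conditional on $D_n=d$, with probability $\tfrac12$ we have $D_{n+1}=d+\mathrm{Bin}(d,\gamma)+\mathrm{Ber}(\beta)$ (when $x_{n+1}=1$), and with probability $\tfrac12$ we have $D_{n+1}=\mathrm{Bin}(d,\gamma)+\mathrm{Bin}(d,\alpha)+\mathrm{Ber}(\beta)$ (when $x_{n+1}=0$). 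In particular, $\E[\hier[p]{n}{d}]=P(D_n=d)$, so the expected degree distribution in $G_n$ is controlled by this chain.

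\textbf{Parts (a) and (b).} Since the derivative of $p\mapsto(1+\gamma)^p+(\alpha+\gamma)^p$ at $p=0$ equals $\log((1+\gamma)(\alpha+\gamma))$, the hypothesis $(1+\gamma)(\alpha+\gamma)<1$ is equivalent to the existence of some $p\in(0,1]$ with $(1+\gamma)^p+(\alpha+\gamma)^p<2$. Using the elementary bound $\E[(d+\mathrm{Bin}(d,\gamma)+\mathrm{Ber}(\beta))^p]\le(1+\gamma)^p d^p+O(d^{p-1}\vee 1)$ and its child-branch analogue, I would iterate the moment recursion to obtain $\sup_n\E[D_n^p]<\infty$; this gives tightness of $(D_n)$. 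To identify the distributional limit $X$, I plan to work with the probability generating function recursion
\[
G_{n+1}(s)=\tfrac12(1-\beta+\beta s)\bigl[G_n(s(1-\gamma+\gamma s))+G_n((1-\gamma+\gamma s)(1-\alpha+\alpha s))\bigr],
\]
and show its only fixed point in the class of probability generating functions is the p.g.f.\ of some $X$. Part (b) then follows from the same moment recursion applied to the limit: Fatou gives $\E[X^p]<\infty$ when $(1+\gamma)^p+(\alpha+\gamma)^p<2$, while in the opposite regime the fixed-point equation forces any putative finite $p$-th moment to be dominated by $\tfrac12[(1+\gamma)^p+(\alpha+\gamma)^p]\E[X^p]$ up to lower-order terms, a contradiction.

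\textbf{Part (c).} When $(1+\gamma)(\alpha+\gamma)>1$, the Lyapunov exponent $\Lambda=\tfrac12(\log(1+\gamma)+\log(\alpha+\gamma))$ is strictly positive, and for large $d$ the law of large numbers applied inside the logarithm gives $\E[\log(D_{n+1}+1)-\log(D_n+1)\mid D_n=d]\to\Lambda$. A Foster--Lyapunov argument with $V(d)=\log(d+1)$, together with the fact that $\beta>0$ makes the chain exit any bounded set in finite expected time, shows $D_n\to\infty$ almost surely, so $P(D_n=d)\to 0$ for every fixed $d$.

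\textbf{From expectation to almost sure, and the main obstacle.} To upgrade convergence of $\E[\hier[p]{n}{d}]$ to almost sure convergence of $\hier[p]{n}{d}$ in both (a) and (c), I would bound $\mathrm{Var}(\hier[p]{n+1}{d}\mid G_n)=O(1/|V(G_n)|)$. The indicators $\mathbf{1}\{D_{n+1}(v0)=d\}$ for different $v\in V(G_n)$ share randomness only through the edge Bernoullis $\hier[a]{n}{\{u,v\}}$ (when $u\sim v$ in $G_n$), and each such Bernoulli affects exactly two child-degrees, so a bounded-differences argument yields the variance bound; Borel--Cantelli then delivers the a.s.\ statement since $|V(G_n)|=2^n|V(G_0)|$ grows geometrically. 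The hardest step I anticipate is uniqueness of the fixed point in (a): the natural coupling of two copies of the chain by shared binomials is \emph{expansive} in expectation (by a factor of $1+\gamma$ on the continuation side), so one cannot apply a direct contraction argument and must instead exploit the functional equation above or a problem-tailored distance on the moment-bounded distributions.
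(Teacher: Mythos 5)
Your setup coincides with the paper's: the degree along a uniformly chosen descendant line is a Markov chain with exactly the transition you write down, and the paper's proof also proceeds via drift and moment estimates for this chain. But there are two genuine gaps. First, the identification of the limit in (a): you prove tightness and then propose to show uniqueness of the fixed point of the p.g.f.\ recursion, a step you yourself flag as the hardest and do not resolve. The expansiveness of the natural coupling is a red herring --- no contraction or functional-equation argument is needed. The chain is irreducible and aperiodic (since $\beta>0$, and $(1+\gamma)(\alpha+\gamma)<1$ forces $\alpha<1$ and $\gamma<1$), so positive recurrence already yields convergence to the unique stationary distribution by the standard ergodic theorem for countable chains; the paper obtains positive recurrence from the Foster--Lyapunov drift condition with $V(x)=x^p$ for small $p>0$ (Theorem 11.0.1 of Meyn--Tweedie), and your own uniform moment bound together with irreducibility would serve the same purpose. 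As written, though, the central step of (a) is open. Relatedly, your ``contradiction'' in the infinite-moment half of (b) does not close: stationarity plus $\E(X_1^p\mid X_0=x)\geq cx^p-C$ with $c>1$ gives only the bound $\E(X^p)\leq C/(c-1)$, not a contradiction. The paper instead shows $\E(X_n^p)\to\infty$ along the chain and invokes the $f$-norm ergodic theorem (Theorem 14.0.1 of Meyn--Tweedie), which would force $\E(X_n^p)$ to converge to a finite limit if the stationary $p$th moment were finite.

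Second, the passage from the chain to the almost sure statement about $\hier[p]{n}{d}$. Your bounded-differences bound on $\Var(\hier[p]{n+1}{d}\mid G_n)$ is reasonable, but it concentrates $\hier[p]{n+1}{d}$ around $\E(\hier[p]{n+1}{d}\mid G_n)$, which is the empirical degree distribution of $G_n$ pushed through the transition kernel $K$ --- not around $P(D_{n+1}=d)$. To conclude you must propagate these one-step errors through iterates of $K$, which is not uniformly ergodic, and your sketch does not address this accumulation at all. The paper avoids the problem with a different decomposition: it regards $G_{r+s}$ as $2^rv_0$ independent copies of the process grown from the individual vertices of $G_r$, so that Chebyshev and Borel--Cantelli apply to genuinely independent summands, and it then shows that edges between descendants of distinct vertices of $G_r$ touch a vanishing proportion of vertices by coupling with the $\beta=0$ process and appealing to the branching-process-in-random-environment result (Proposition \ref{beta0p}). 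Some argument of this kind is needed to make both (a) and (c) almost sure statements, and it is missing from your proposal.
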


Note that Theorem \ref{betanot0}(b) implies that if $(1+\gamma)^p+(\alpha+\gamma)^p=2$ the tail of the degree distribution is asymptotically close to a power law degree distribution with index given by $-(p+1)$, in the sense that $q$th moments exist for $q<p$ but not for $q>p$.

In \cite{ILT}, it is shown that the ILT model exhibits a ``densification power law'', which is defined to mean that, if $E_n$ is the number of edges of $G_n$ and $V_n$ the number of vertices, then $E_n$ is proportional to $(V_n)^a$ for some $a\in(1,2)$.  The following result shows that our model exhibits a phase transition in this respect, with the transition occurring where $2\gamma+\alpha=1$.  Note that in our model, as in the ILT model, $V_n=2^nV_0$ for all $n$.

\begin{thm}\label{dense}\begin{description}\item[(a)] If $2\gamma+\alpha>1$ then $W_n=\frac{E_n}{(1+2\gamma+\alpha)^n}$ converges to a positive limit, so that the model has a densification power law as defined by \cite{ILT} with exponent $\frac{\log(1+2\gamma+\alpha)}{\log 2}$.
\item[(b)] If $2\gamma+\alpha<1$ then $$\frac{E_n}{2^n}\to \frac{V_0\beta}{1-2\gamma-\alpha},$$ almost surely, as $n\to\infty$, so that the number of edges grows at the same rate as the number of vertices
\item[(c)] If $2\gamma+\alpha=1$ then $$\frac{E_n}{2^n n}\to \frac{V_0\beta}{2},$$ almost surely, as $n\to\infty$.\end{description}
\end{thm}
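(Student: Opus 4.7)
\emph{Proposal.} The proof is driven by computing the conditional first and second moments of $E_{n+1}$ given $G_n$. Conditional on $G_n$, the edges of $G_{n+1}$ consist of the $E_n$ inherited edges (mechanism~(a), deterministic) together with three families of conditionally independent Bernoulli variables: one per vertex with parameter $\beta$ (mechanism~(b); $V_n$ in total), two per edge with parameter $\gamma$ (mechanism~(c) is indexed by \emph{ordered} pairs of endpoints, giving $2E_n$), and one per edge with parameter $\alpha$ (mechanism~(d); $E_n$). Writing $\lambda := 1+2\gamma+\alpha$ and using $V_n = V_0 2^n$, this yields
\begin{equation*}
\E[E_{n+1}\mid G_n] = \lambda E_n + \beta V_0\, 2^n, \qquad \Var(E_{n+1}\mid G_n) = O(E_n) + O(2^n).
\end{equation*}
Taking expectations gives the closed form $m_n := \E[E_n] = \lambda^n m_0 + \beta V_0 (\lambda^n-2^n)/(\lambda-2)$ when $\lambda\neq 2$, and $m_n = 2^n m_0 + \beta V_0\, n\, 2^{n-1}$ when $\lambda = 2$; the leading order of $m_n$ already matches the three claimed limits.

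For part~(a), set $W_n := E_n/\lambda^n$ and $a_n := \beta V_0 \lambda^{-1} \sum_{k=0}^{n-1} (2/\lambda)^k$, so that $a_n \to \beta V_0/(\lambda-2)$. The conditional-mean identity makes $M_n := W_n - a_n$ a martingale with respect to the filtration $(\sigma(G_0,\dots,G_n))_{n\ge 0}$, and its successive differences satisfy
\begin{equation*}
\E[(M_{n+1}-M_n)^2] = \lambda^{-2(n+1)} \E\bigl[\Var(E_{n+1}\mid G_n)\bigr] = O(\lambda^{-n}) + O\bigl((2/\lambda^2)^n\bigr),
\end{equation*}
which is summable because $\lambda > 2$ (hence in particular $\lambda^2 > 2$). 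Thus $\sup_n \E[M_n^2] < \infty$, and the $L^2$ martingale convergence theorem gives $M_n \to M_\infty$ almost surely, whence $W_n \to W_\infty := M_\infty + \beta V_0/(\lambda-2)$ almost surely. Strict positivity of $W_\infty$ in the non-degenerate setting follows from $\E[W_\infty] > 0$ combined with a short zero--one argument, for example a supercritical branching-process lower bound on the descendants of an initial edge that rules out $\{W_\infty = 0\}$.

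For parts~(b) and~(c), I would instead use Chebyshev and Borel--Cantelli directly on the centred quantity $E_n - m_n$. Iterating the unconditional variance recursion $\Var(E_{n+1}) = \lambda^2 \Var(E_n) + O(\lambda^n) + O(2^n)$ gives $\Var(E_n) = O(\max\{\lambda^{2n}, 2^n\})$ when $\lambda \neq 2$, and $\Var(E_n) = O(4^n)$ when $\lambda = 2$. In case~(b) (so $\lambda < 2$) this yields $\Var(E_n/2^n) = O((\lambda/2)^{2n}) + O(2^{-n})$, which is summable; Chebyshev's inequality and Borel--Cantelli then give $E_n/2^n - m_n/2^n \to 0$ almost surely, and the explicit limit $m_n/2^n \to \beta V_0/(2-\lambda)$ delivers the claim. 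Case~(c) is identical with the normaliser $n\, 2^n$: one has $\Var(E_n/(n\, 2^n)) = O(1/n^2)$, again summable.

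The only delicate point is the $L^2$ boundedness in case~(a). Mechanism~(b) alone contributes a $\Theta(2^n)$ term to $\Var(E_{n+1}\mid G_n)$, and this is precisely where the hypothesis $\lambda > 2$ is used: one needs $\lambda^2 > 2$ to make $\sum_n 2^n/\lambda^{2(n+1)}$ converge, and $\lambda > 2$ is amply sufficient. The strict positivity of $W_\infty$ in case~(a) is the only remaining non-routine item; cases~(b) and~(c) reduce to second-moment calculations once the variance recursion is unrolled.
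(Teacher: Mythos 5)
Your proposal is correct, and for parts (b) and (c) it is essentially the paper's own argument: the same conditional first and second moments (the paper records these as Lemma \ref{subgraphs}), the same unrolling of the variance recursion, and the same Chebyshev--Borel--Cantelli step. For part (a) you take a mildly different technical route. The paper forms the linear combination $W_n=\bigl(V_n+\tfrac{2\gamma+\alpha-1}{\beta}E_n\bigr)/(1+2\gamma+\alpha)^n$, which the conditional-mean identity makes a \emph{non-negative} martingale, so almost sure convergence comes for free from the martingale convergence theorem with no second-moment computation at all; you instead centre $E_n/\lambda^n$ by a deterministic compensator $a_n$ and prove $L^2$-boundedness from the variance recursion, using $\lambda>2$ to make the increment variances summable. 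Both are valid; the paper's device is shorter but its normalisation divides by $\beta$ (so it silently assumes $\beta>0$), whereas your version handles $\beta=0$ uniformly and additionally yields $L^2$ convergence and the value of $\E[W_\infty]$. For the strict positivity of the limit you and the paper rely on the identical idea: the descendants of a fixed edge form a Galton--Watson process with offspring mean $1+2\gamma+\alpha$, bounded offspring and extinction probability zero (every edge has at least its retained copy as offspring), giving a positive lower bound on $\liminf E_n/\lambda^n$; your phrasing ``$\E[W_\infty]>0$ plus a zero--one argument'' is vaguer than needed, since the branching lower bound alone already rules out $\{W_\infty=0\}$, but you do name that bound explicitly. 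One small point in your favour: in case (c) the iterated variance is $O(4^n)$ as you say (the one-step conditional variance $O(n2^n)$ gets multiplied up by the factor $\lambda^2=4$ in the recursion), which is what the Chebyshev step actually needs with the normaliser $n2^n$; the paper's displayed order $O(n2^n)$ for $\Var(E_{n+1})$ appears to undercount this, though the conclusion is unaffected.
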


Note that the combination of Theorems \ref{dense} and \ref{betanot0} implies that when $2\gamma+\alpha>1$ but $(1+\gamma)(\alpha+\gamma)<1$ the process exhibits both a densification power law in the sense of \cite{ILT} and an approximately power law limit for the degree distribution.

A further result in \cite{ILT} on the ILT model concerns the spectral gap.  They show that the normalised graph Laplacian $\mathcal{L}$, as defined by Chung \cite{chung}, of the ILT model has a large spectral radius, defined as $\max\{|\lambda_1-1|,|\lambda_{n-1}-1|\}$, where $\lambda_1$ is the second smallest eigenvalue (the smallest being $\lambda_0=0$ for any graph) and $\lambda_{n-1}$ is the largest eigenvalue and thus that the graph has relatively poor expansion properties.  The following results show that the same is also true for our model.  We concentrate on the case $\beta=1$, where the graphs are connected; otherwise $\lambda_1$ will be zero.  The proofs use the Cheeger constant and its relationship to $\lambda_1$, as defined in Chapter 2 of Chung \cite{chung}.

\begin{thm}\label{gap}Let $\beta=1$ and assume that $G_0$ is connected, so that $G_n$ will also be connected for all $n$.  Let $\lambda_1(G_n)$ be the smallest non-negative eigenvalue of the Laplacian of $G_n$.  Then \begin{description}\item[(a)] If $2\gamma+\alpha\leq1$ then $\lambda_1(G_n)\to 0$ as $n\to\infty$.
\item[(b)] If $2\gamma+\alpha> 1$ then there exists a (random) $\Lambda$, with $\Lambda<1$ almost surely, such that $\limsup_{n\to\infty} \lambda_1=\Lambda$.\end{description}
\end{thm}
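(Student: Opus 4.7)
My approach to both parts is via the Chung--Cheeger inequality $\lambda_1(G_n)\leq 2h(G_n)$ together with a natural ``ancestry'' cut. Since $G_0$ is connected with $|V_0|\geq 2$, fix a non-trivial bipartition $V_0=A\sqcup B$ crossed by at least one edge, and let $S_n\subseteq V(G_n)$ denote the set of descendants of $A$. Write $X_n$, $Y_n$ and $Z_n$ for the numbers of edges internal to $S_n$, internal to $\bar S_n$, and crossing the cut. A key structural observation is that these three counts are driven by disjoint families of Bernoullis and hence, conditional on $G_0$, evolve as independent processes. Each existing edge spawns $1+\mathrm{Ber}(\gamma)+\mathrm{Ber}(\gamma)+\mathrm{Ber}(\alpha)$ edges of the same type at the next step, with mean $r:=1+2\gamma+\alpha$, while the $\beta=1$ parent--child edges are always internal; hence $\E[Z_{n+1}\mid\mathcal F_n]=rZ_n$ and $\E[X_{n+1}\mid\mathcal F_n]=rX_n+|S_n|$, and analogously for $Y_n$. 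In particular $Z_n/r^n$ is a non-negative martingale.

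For part (a), $r\leq 2$. The sub-process on $S_n$ is itself an instance of the model with initial graph $G_0|_A$, so Theorem \ref{dense}(b) or (c) applied to it gives $X_n\sim c_A 2^n$ (if $r<2$) or $X_n\sim c_A' n2^n$ (if $r=2$) almost surely, with $c_A,c_A'>0$. Martingale convergence gives $Z_n=O(r^n)$ almost surely. The Cheeger ratio of this cut is at most $Z_n/(2X_n)$, which is $O((r/2)^n)$ in the subcritical case or $O(1/n)$ in the critical one, and so tends to $0$ almost surely; Cheeger then yields $\lambda_1(G_n)\to 0$.

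For part (b), $r>2$. Theorem \ref{dense}(a) applied to the sub-processes yields $X_n/r^n\to X^*>0$ and $Y_n/r^n\to Y^*>0$ almost surely, and the martingale $Z_n/r^n\to Z^*$ with $Z^*>0$ almost surely: positivity here because the evolution of $Z_n$ is a supercritical branching-type process in which each edge always produces at least one offspring (the retained edge $\{u1,v1\}$), ruling out extinction. Hence the Cheeger ratio converges almost surely to $R^*=Z^*/\bigl(2\min(X^*,Y^*)+Z^*\bigr)\in(0,1)$, giving $\limsup_n\lambda_1(G_n)\leq 2R^*<2$ almost surely.

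The main obstacle is sharpening the last bound to the claimed $\Lambda<1$: one needs $R^*<\tfrac12$, i.e.\ $Z^*<2\min(X^*,Y^*)$ almost surely. The ratios of the expectations of $X^*,Y^*,Z^*$ already suggest this whenever $2\gamma+\alpha<3$, but because $X^*,Y^*,Z^*$ are independent with unbounded right tails the strict inequality is not automatic from any single cut. I would attack this either by considering many ancestry cuts in parallel---one for each non-trivial bipartition of $V_0$, or each vertex of $G_1$, giving a collection of ratios whose independent ingredients can be played off against each other---and arguing that their infimum is almost surely bounded away from $\tfrac12$, or by refining Cheeger to the Rayleigh-quotient trial function $f=c_1\mathbf{1}_{S_n}-c_2\mathbf{1}_{\bar S_n}$ chosen orthogonal to $D\mathbf{1}$, which yields $\lambda_1\leq Z_n\,\vol(V_n)/\bigl(\vol(S_n)\vol(\bar S_n)\bigr)$ and so the limiting sufficient condition $(Z^*)^2<4X^*Y^*$.
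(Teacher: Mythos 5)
Your construction is essentially the paper's: the paper likewise fixes a cut in an early graph $G_m$, tracks its descendant set $S_n$, observes that $e(S_n,S_n)$, $e(\bar S_n,\bar S_n)$ and $e(S_n,\bar S_n)$ are driven by disjoint Bernoulli families --- the first two being copies of the model itself, to which Theorem \ref{dense} applies, the third a Galton--Watson process with offspring mean $1+2\gamma+\alpha$, minimum offspring $1$ and hence no extinction --- and then feeds the resulting Cheeger ratio into Chung's inequalities. Your part (a) is complete and correct, and matches the paper's argument essentially line for line.

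For part (b) you have correctly identified, and honestly flagged, the step that is missing: from $h(G_n)\to R^*\in(0,1)$ the inequality $\lambda_1\le 2h(G_n)$ yields only $\limsup_n\lambda_1\le 2R^*<2$, and to reach the stated $\Lambda<1$ one needs either $R^*<\tfrac{1}{2}$ or the sharper Rayleigh-quotient bound $\lambda_1\le e(S,\bar S)\vol(G)/(\vol(S)\vol(\bar S))$, for which the limiting sufficient condition is $(Z^*)^2<4X^*Y^*$ almost surely. Neither of your proposed routes (many cuts in parallel, or the two-valued trial function) is carried out, and the inequality is not automatic for a single cut since $X^*,Y^*,Z^*$ are independent random variables whose realised values need not respect the ordering of their means; so as written your argument proves only $\limsup_n\lambda_1<2$. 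You should know that the paper is no more detailed at exactly this point: it concludes $\limsup_n\lambda_1<1$ by citing ``Lemma 2.1 and Theorem 2.2'' of Chung, but Lemma 2.1 is precisely the bound $\lambda_1\le 2h_G$ you used and Theorem 2.2 is a lower bound on $\lambda_1$, so the published proof gives no additional mechanism for passing from $h(G_n)<1$ to $\lambda_1(G_n)<1$. In short, your proposal reproduces the paper's proof of (a) and of everything in (b) that the paper actually establishes, and the residual step you isolate is a genuine gap --- in your write-up, and arguably in the paper's as well; closing it along your Rayleigh-quotient line (proving $(Z^*)^2<4X^*Y^*$ a.s., or choosing the cut adaptively) would be a real improvement.
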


\section{Pictures}

This section shows a few examples of graphs of this type, which were generated using a script written with the igraph package, \cite{igraph}, in R.  The first two pictures show examples with $n=7$, $\alpha=0$, $\beta=1$ and $\gamma=0.2$ and
$0.49$ respectively.

\begin{centre}
\begin{tabular}{cc}\scalebox{0.5}{{\includegraphics{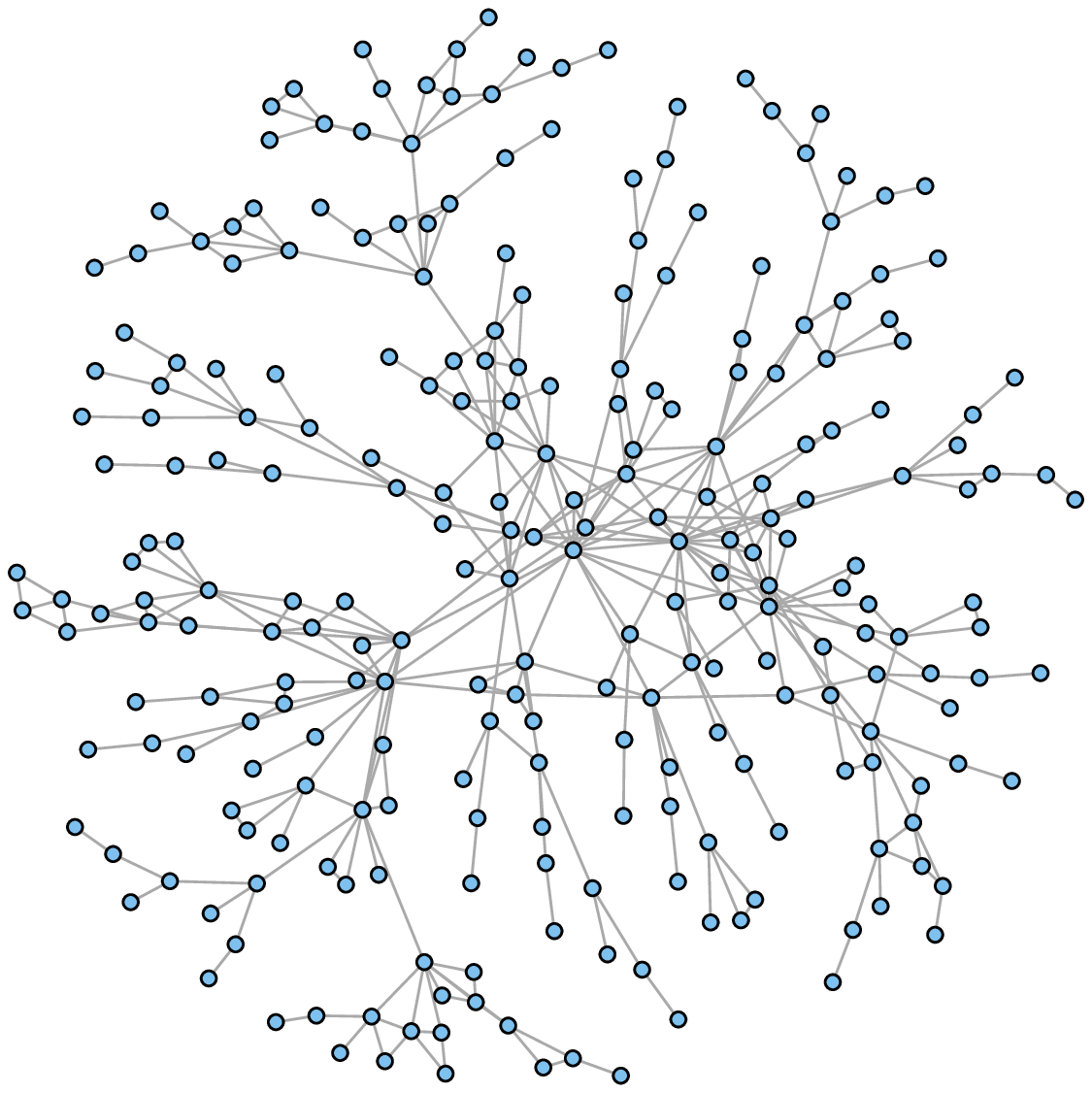}}} &
\scalebox{0.5}{{\includegraphics{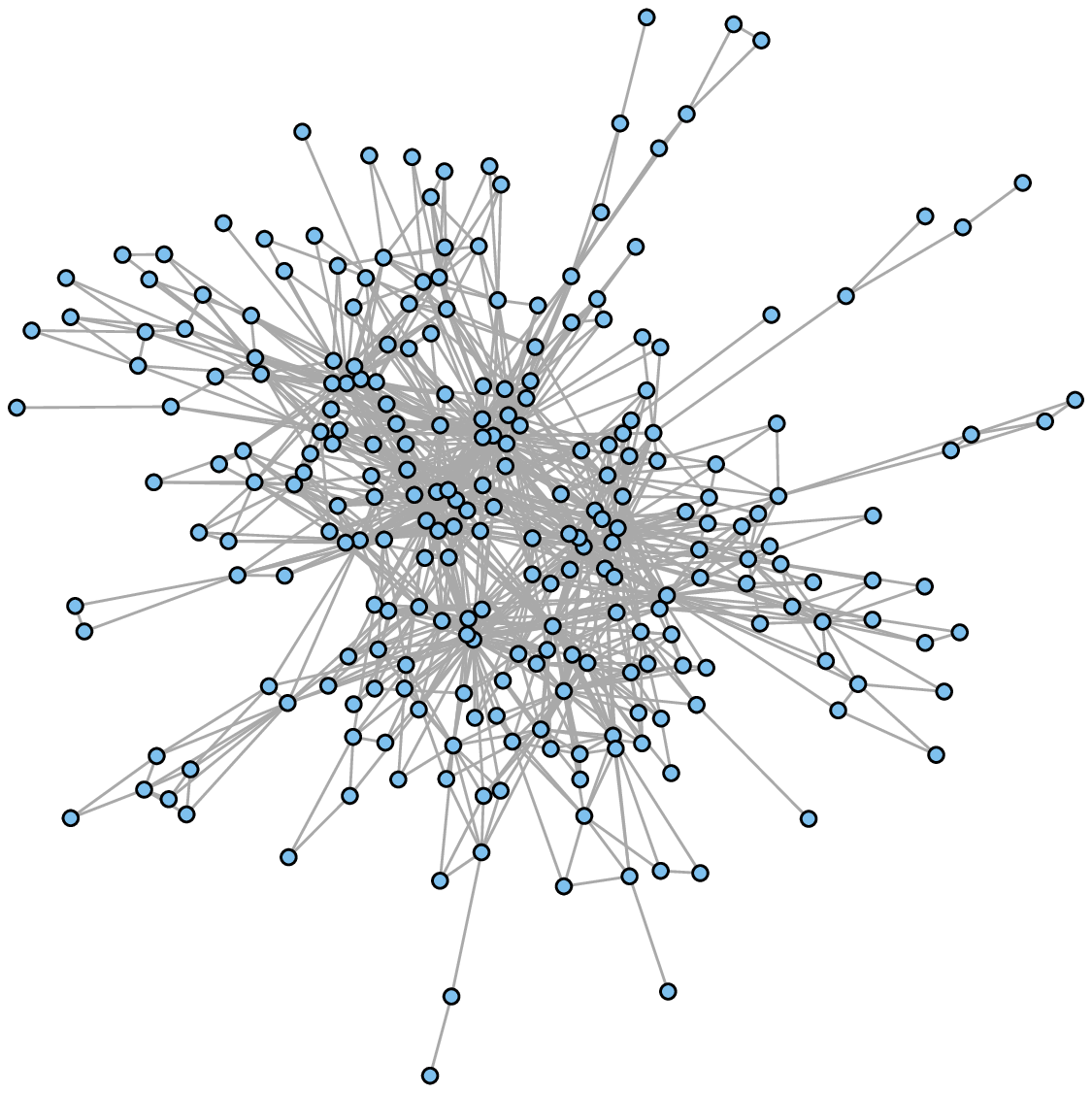}}} \end{tabular}
\end{centre}

The next two pictures have $\alpha=0, \gamma=0.366 (\approx
\frac{\sqrt{3}-1}{2})$, and again $n=7$.  The first has $\beta=0.5$, the second
$\beta=0.8$.
\begin{centre}
\begin{tabular}{cc}\scalebox{0.5}{{\includegraphics{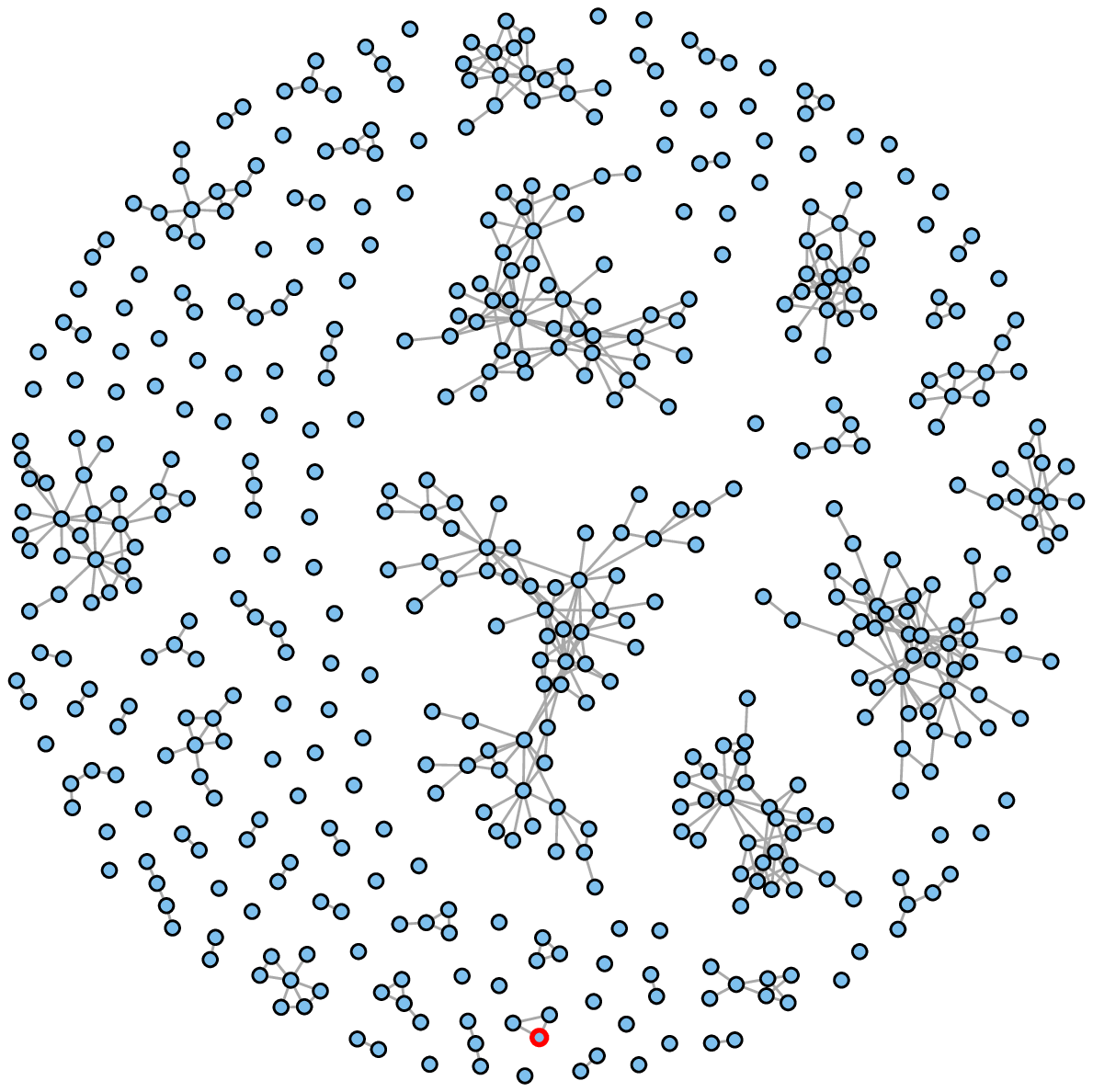}}} &
\scalebox{0.5}{{\includegraphics{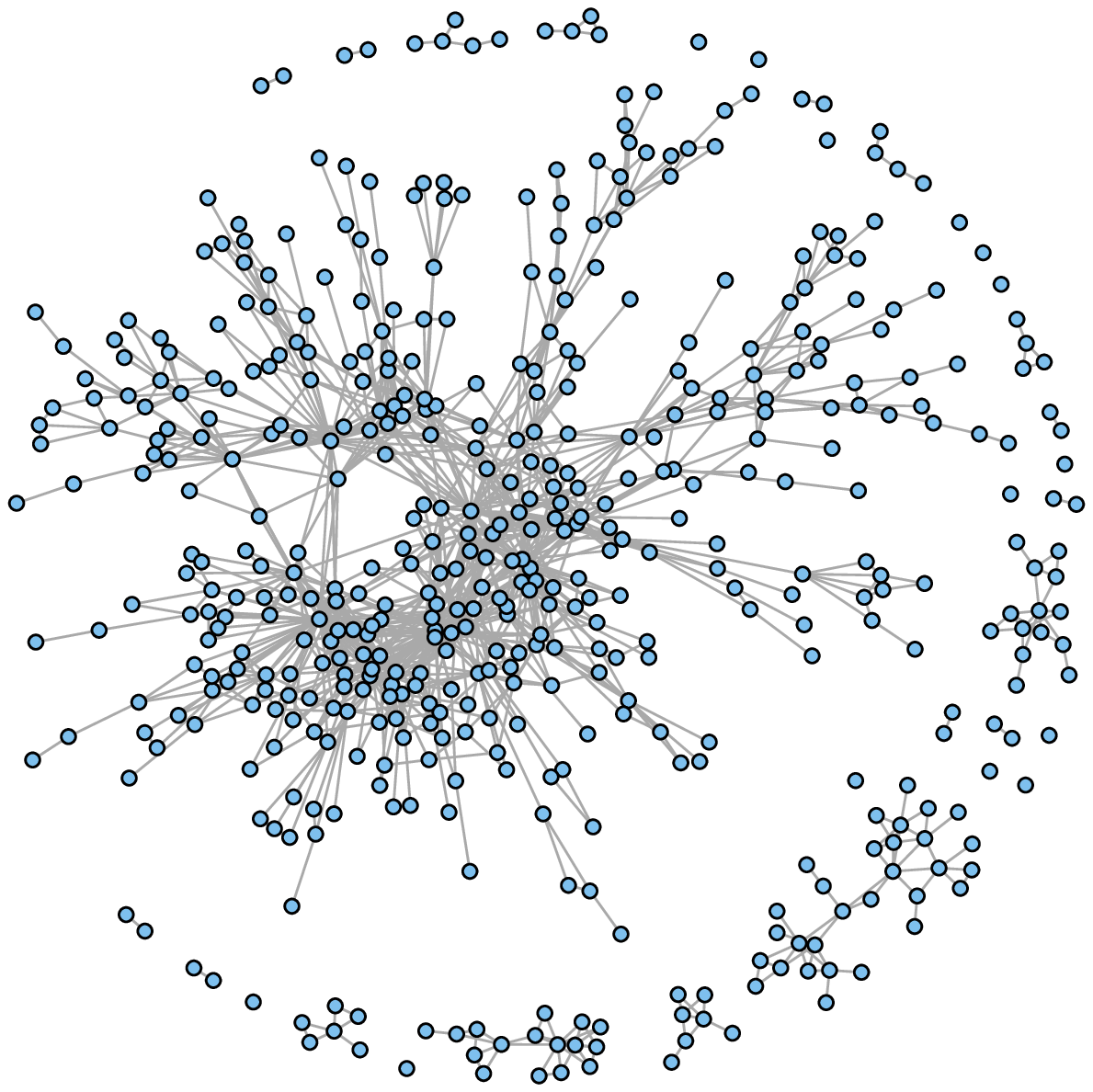}}} \end{tabular}
\end{centre}

\section{Proofs of Theorems}

We start with a lemma on the conditional expectation and variance of the number of edges in $G_n$, $E_n$.  This lemma will be useful for obtaining the mean of the stationary distribution of a Markov chain which we will use to prove Theorems \ref{beta0} and \ref{betanot0}.  We define $\salj_n$ to be the $\sigma$-algebra generated by the graphs $G_m$ for $m\leq n$.

\begin{lem}\label{subgraphs}
The conditional expectation and variance of $E_{n+1}$ satisfy
\begin{eqnarray*} \E(E_{n+1}|\salj_n) &=& (1+2\gamma+\alpha)E_n + 2^n\beta V_0 \\ \Var(E_{n+1}|\salj_n) &=& E_n(2\gamma(1-\gamma)+\alpha(1-\alpha))+2^nV_0\beta(1-\beta)\end{eqnarray*}
\end{lem}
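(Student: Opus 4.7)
The plan is to decompose $E_{n+1}$ into a sum of contributions corresponding to the four edge-creation rules (a)--(d), and then use linearity of conditional expectation and the fact that, conditional on $\salj_n$, the Bernoulli variables $\hier[a]{n}{\cdot}$, $\hier[b]{n}{\cdot}$, $\hier[c]{n}{\cdot}$ are all mutually independent to compute both moments directly.

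Concretely, I would write
\begin{equation*}
E_{n+1} = E_n + \sum_{u\in V(G_n)} \hier[b]{n}{u} + \sum_{\{u,v\}\in E(G_n)}\!\!\bigl(\hier[c]{n}{(u,v)} + \hier[c]{n}{(v,u)} + \hier[a]{n}{\{u,v\}}\bigr),
\end{equation*}
since rule (a) deterministically contributes the $E_n$ retained edges, rule (b) contributes one Bernoulli($\beta$) per vertex of $G_n$ (of which there are $V_n = 2^n V_0$), and rules (c) and (d) contribute exactly $2\gamma$-type and $1\alpha$-type Bernoulli summand per existing edge of $G_n$. Given $\salj_n$, both $E_n$ and the collection of edges of $G_n$ are determined, so taking conditional expectations gives
\begin{equation*}
\E(E_{n+1}\mid\salj_n) = E_n + 2^n V_0 \beta + E_n(2\gamma + \alpha) = (1+2\gamma+\alpha)E_n + 2^n V_0\beta,
\end{equation*}
which is the stated mean.

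For the variance, the deterministic term $E_n$ drops out, and conditional independence of all the Bernoullis turns the variance of the sum into a sum of variances. Each $\hier[b]{n}{u}$ contributes $\beta(1-\beta)$, each $\hier[c]{n}{(\cdot,\cdot)}$ contributes $\gamma(1-\gamma)$, and each $\hier[a]{n}{\{\cdot,\cdot\}}$ contributes $\alpha(1-\alpha)$, giving
\begin{equation*}
\Var(E_{n+1}\mid\salj_n) = 2^n V_0 \beta(1-\beta) + E_n\bigl(2\gamma(1-\gamma) + \alpha(1-\alpha)\bigr),
\end{equation*}
as required.

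There is no real obstacle here; the only thing to be careful about is bookkeeping, in particular not double-counting the type-(c) edges (each unordered edge $\{u,v\}$ of $G_n$ gives rise to two potential edges $\{u0,v1\}$ and $\{v0,u1\}$, governed by the two ordered Bernoulli variables $\hier[c]{n}{(u,v)}$ and $\hier[c]{n}{(v,u)}$), and being explicit that the four families of Bernoullis are independent of each other and of $\salj_n$, which is built into the model's definition.
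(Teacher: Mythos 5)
Your proposal is correct and follows essentially the same route as the paper, which writes $E_{n+1}=E_n+E_{n+1,1}+E_{n+1,2}+E_{n+1,3}$ with independent conditional distributions $Bi(2E_n,\gamma)$, $Bi(E_n,\alpha)$ and $Bi(V_n,\beta)$ for the three new-edge types; your explicit sum over the individual Bernoulli variables is just a more granular statement of the same decomposition. The bookkeeping point you flag (two ordered $\gamma$-variables per existing edge) is exactly what makes the first piece $Bi(2E_n,\gamma)$, so nothing is missing.
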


\begin{proof}
This follows from the fact that the $E_{n+1}$ can be written $E_n+E_{n+1,1}+E_{n+1,2}+E_{n+1,3}$ where $E_{n+1,1}$, $E_{n+1,2}$ and $E_{n+1,3}$ are independent, $E_{n+1,1}$ represents the edges between parents and children of their neighbours and, conditional on $\salj_n$ has a $Bi(2E_n,\gamma)$ distribution, $E_{n+1,2}$ represents the edges between children of neighbouring vertices and, conditional on $\salj_n$ has a $Bi(E_n,\alpha)$ distribution, and $E_{n+1,3}$ represents the edges between parents and their children and, conditional on $\salj_n$ has a $Bi(V_n,\beta)$ distribution. As $V_n=2^n V_0$ the result follows.
\end{proof}

\subsection{Proof of Theorem \ref{dense}}

We start with (a).  The following approach is based on that in \cite{athreya} for multitype branching processes, the idea being that the vertices and edges in the graph $G_n$ can be thought of as the two types in a population undergoing branching.  However the resulting multitype branching process is not irreducible, so the results in \cite{athreya} cannot be used directly.

Given an edge in $G_m$ between vertices $u$ and $v$, there will be an edge in $G_{m+1}$ between $u1$ and $v_1$, and in addition there will be edges between $u1$ and $v0$ and $v1$ and $u0$ each with probability $\gamma$ and an edge between $u0$ and $v0$ with probability $\alpha$.  We can consider these edges as offspring of the edge between $u$ and $v$, and thus consider the set of edges in $G_n$ (for $n>m$) which are descendants of the edge between $u$ and $v$ in $G_m$ as a generation in a Galton-Watson branching process with offspring mean $1+2\gamma+\alpha$, and where the extinction probability is zero and the number of offspring bounded.  Treating the descendants of a given edge in $G_m$ as a subset of the edge set of $G_n$, this shows that $\liminf \frac{E_n}{(1+2\gamma+\alpha)^n}$ is a positive random variable.

Now define $$W_n=\frac{V_n+\frac{2\gamma+\alpha-1}{\beta}E_n}{(1+2\gamma+\alpha)^n}.$$  Then $\E(W_{n+1}|\salj_n)$, so $(W_n)_{n\in \N}$ is a non-negative martingale, and thus almost surely has a non-negative limit $W$.  The above conclusion on $\liminf \frac{E_n}{(1+2\gamma+\alpha)^n}$ shows that $P(W=0)=0$, giving the result.

For (b), Lemma \ref{subgraphs} shows that $\E(E_n)=\frac{V_0\beta}{1-2\gamma-\alpha}2^n+o(2^n)$ and $$\Var(E_{n})=2^{n-1}\frac{V_0\beta}{1-2\gamma-\alpha}(2\gamma(1-\gamma)+o(2^n)+\alpha(1-\alpha))+2^nV_0\beta(1-\beta)+(1+2\gamma+\alpha)^2
\Var(E_{n-1}),$$ which shows that $$\Var\left(\frac{E_n}{2^n}\right)=O\left(\left(\max\left(\half,\frac{(1+2\gamma+\alpha)^2}{4}\right)\right)^n\right),$$ which implies the result via Chebyshev's inequality and the Borel-Cantelli Lemmas.

For (c), an iterative use of Lemma \ref{subgraphs} shows that $\E(E_n)=2^n\left(E_0+\frac{\beta V_0}{2}n\right)$ and $\Var(E_{n+1})=2^{n-1}n\beta V_0 (2\gamma(1-\gamma)+\alpha(1-\alpha)+4)+O(2^n)$.  Then $$\Var\left(\frac{E_n}{2^n n}\right)=O\left(\frac{1}{n^2 2^n}\right),$$ allowing the Chebyshev/Borel-Cantelli argument again.

\subsection{Proof of Theorem \ref{gap}}
We consider some small $m$, and find the Cheeger constant of $G_m$.  By the definition in \cite{chung}, this will be $e(S_m,\bar{S_m})/\vol(S_m)$ for some $S_m\subseteq V(G_m)$, where for two subsets of the vertex set $S$ and $S'$ $e(S,S')$ is the number of edges between a vertex in $S$ and one in $S'$, and $\vol(S)$ is the sum of the degrees of vertices in $S$.  (Note that $\vol(S)=2e(S,S)+e(S,\bar{S}$.)  Now consider the descendants of $S_m$ in $G_n$ as a subset $S_n\subseteq V(G_n)$.  Then the same arguments as in the proof of Theorem \ref{dense}, applied to the subgraphs descending from $S_m$ and $\bar{S_m}$, show that if $2\gamma+\alpha<1$ then the $e(S_n,S_n)$ and $e(\bar{S_n},\bar{S_n})$ both grow at rate $2^n$, in the sense that $\frac{e(S_n,S_n)}{2^n}$ and $\frac{e(\bar{S_n},\bar{S_n})}{2^n}$ converge almost surely to positive constants as $n\to\infty$, and similarly if $2\gamma+\alpha=1$ $e(S_n,S_n)$ and $e(\bar{S_n},\bar{S_n})$ both grow at rate $2^n n$, and if $2\gamma+\alpha>1$ $e(S_n,S_n)$ and $e(\bar{S_n},\bar{S_n})$ both grow at rate $(1+2\gamma+\alpha)^n$.

Next, again as in the proof of Theorem \ref{dense}, $(e(S_n,\bar{S_n}))_{n\in\N}$ forms a Galton-Watson branching process with mean of the offspring distribution $1+2\gamma+\alpha$, and extinction probability zero, so $e(S_n,\bar{S_n})$ will grow at rate $(1+2\gamma+\alpha)^n$.  So for $2\gamma+\alpha> 1$ $\frac{e(S_n,\bar{S_n})}{\min(\vol(S_n),\vol(\bar{S_n}))}$, which by the definition in \cite{chung} is greater than the Cheeger constant of $G_n$, converges to a constant (less than $1$, as $\vol(S_n)=2e(S_n,S_n)+e(S_n,\bar{S_n})$) and this constant bounds the lim sup of the Cheeger constant of $G_n$ above.

In the case where $2\gamma+\alpha<1$ $\frac{e(S_n,\bar{S_n})}{\max(\vol(S_n),\vol(\bar{S_n}))}=O\left(\left(\frac{1+2\gamma+\alpha}{2}\right)^n\right)\to 0$ as $n\to\infty$, and hence so is the Cheeger constant.  Similarly if $2\gamma+\alpha=1$ $\frac{e(S_n,\bar{S_n})}{\max(\vol(S_n),\vol(\bar{S_n}))}=O\left(\frac{1}{n}\right)$, and again so is the Cheeger constant.

Hence by the Cheeger inequality (Lemma 2.1 and Theorem 2.2 of \cite{chung}), $\limsup_{n\to\infty}\lambda_1(G_n)<1$ almost surely in the case $2\gamma+\alpha> 1$, and $\lambda_1(G_n)$ tends to zero in the case $2\gamma+\alpha\leq 1$.

\subsection{Proofs of Theorems \ref{beta0} and \ref{betanot0}}

The proofs of Theorems \ref{beta0} and \ref{betanot0} will rely on defining a certain Markov chain whose value $X_n$ represents the degree of a random vertex in the graph $G_n$.  We construct this by letting $v_0$ be a vertex of $G_0$ chosen uniformly at random, and then, using the binary string notation for the vertices described above, for $n\geq 1$ let $v_{n}=v_{n-1}1$ with probability $1/2$ and
letting $v_n=v_{n-1}0$ with probability $1/2$.  We then let $X_n$ be the degree of $v_n$ in $G_n$.

Then
\begin{equation}\label{xrec}X_{n+1}=\xi_{n+1}X_n+(1-\xi_{n+1})W_{n+1}+
Y_{n+1}+Z_{n+1},\end{equation} where, conditional on $G_n$,
$Y_{n+1}\sim Bin(X_n,\gamma)$, $W_{n+1}\sim Bin(X_n,\alpha)$,
$Z_{n+1}\sim Bin(1,\beta)$ and $\xi_{n+1}\sim Bin(1,\half)$, with
all these variables being conditionally independent given $G_n$.

Here, $\xi_{n+1}=1$ if our vertex in $G_{n+1}$ is a parent and $0$
if it is a child, $W_{n+1}$ represents child-child connections (so
does not appear if $\xi_{n+1}=1$), $Y_{n+1}$ represents
connections between a child and its parents' neighbours, and
$Z_{n+1}$ represents the connection between the child and its parent.

As defined above, $(X_n)_{n\in \N}$ is a discrete time Markov
chain on the natural numbers (including zero if $\beta<1$). It is
irreducible and aperiodic if $\beta>0$, $\alpha<1$ and $\gamma<1$. (If $\beta=0$
then zero is an absorbing state, and if either $\alpha$ or
$\gamma$ is $1$ then $X_n$ is increasing in $n$ and so the chain
is certainly not irreducible, but otherwise $P(X_{n+1}=1|\salj_n)$ is always positive.)

\begin{prop}If $2\gamma+\alpha<1$ the distribution of $X_n$ converges in the Wasserstein-$1$ metric to a unique fixed point with finite mean $\frac{2\beta}{1-2\gamma-\alpha}$.\end{prop}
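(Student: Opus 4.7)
The plan is to exhibit a Wasserstein-$1$ contraction of the transition kernel of $(X_n)$ with ratio $\rho:=(1+2\gamma+\alpha)/2$, which is strictly less than $1$ under the hypothesis $2\gamma+\alpha<1$. First, taking conditional expectations in (\ref{xrec}) gives
\[\E(X_{n+1}\mid X_n)=\rho X_n+\beta,\]
so any stationary mean $m^*$ must satisfy $m^*=\rho m^*+\beta$, forcing $m^*=\beta/(1-\rho)=2\beta/(1-2\gamma-\alpha)$, the claimed value.

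For the contraction I would couple two copies $X_n$, $X_n'$ of the chain synchronously: use a single common $\xi_{n+1}$ and $Z_{n+1}$, and realise the binomial variables via common i.i.d.\ Bernoulli sequences $(B^Y_i)\sim\text{Ber}(\gamma)$ and $(B^W_i)\sim\text{Ber}(\alpha)$, setting
\[Y_{n+1}=\sum_{i=1}^{X_n}B^Y_i,\qquad Y'_{n+1}=\sum_{i=1}^{X_n'}B^Y_i,\]
and analogously for $W,W'$. With this coupling $|Y_{n+1}-Y'_{n+1}|$ is $\text{Bin}(|X_n-X_n'|,\gamma)$-distributed conditional on $(X_n,X_n')$, and similarly for $W$; the $Z_{n+1}$ contribution cancels. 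Taking absolute values in (\ref{xrec}) and conditioning gives
\[\E|X_{n+1}-X_{n+1}'|\le \left(\tfrac12+\tfrac12\alpha+\gamma\right)\E|X_n-X_n'|=\rho\,\E|X_n-X_n'|.\]
Starting from the $W_1$-optimal coupling of $\mathcal{L}(X_0)$ and $\mathcal{L}(X_0')$ and iterating yields $W_1(\mathcal{L}(X_n),\mathcal{L}(X_n'))\le\rho^n W_1(\mathcal{L}(X_0),\mathcal{L}(X_0'))$.

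Applying this with $X_0'\stackrel{d}{=}X_1$ shows $(\mathcal{L}(X_n))_n$ is Cauchy in $W_1$; since the $W_1$-space of probability measures on $\N$ with finite first moment is complete, the sequence converges to some $\mu_\infty$. Passing to the limit in the one-step bound shows $\mu_\infty$ is a fixed point of the transition kernel, and the contraction gives uniqueness of such a fixed point. Because $x\mapsto x$ is $1$-Lipschitz, $W_1$ convergence implies convergence of means, so the mean of $\mu_\infty$ equals $\lim_n\E(X_n)=m^*$, which is finite (and the initial mean $\E(X_0)$ is finite since $G_0$ is a fixed finite graph).

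The main subtlety is the choice of coupling. Independent realisations of $Y_{n+1}$ and $Y'_{n+1}$ would give $\E|Y_{n+1}-Y'_{n+1}|$ of order $\sqrt{X_n\vee X_n'}$ rather than of order $|X_n-X_n'|$, ruining the contraction; the common-Bernoulli (thinning) coupling above makes the differences telescope cleanly into binomials with small index $|X_n-X_n'|$. Granted that, the argument is a routine Banach fixed-point argument in the Wasserstein metric.
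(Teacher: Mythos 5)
Your proof is correct and follows essentially the same route as the paper: the paper likewise couples the chain with a second copy driven by the same Bernoulli trials and the same $\xi_{n+1}$, $Z_{n+1}$, obtaining exactly the contraction factor $\tfrac12(1+2\gamma+\alpha)$ in the Wasserstein-$1$ metric and concluding by a fixed-point argument. The only difference is in identifying the mean: you take expectations directly in \eqref{xrec} and solve $m^*=\tfrac12(1+2\gamma+\alpha)m^*+\beta$, whereas the paper computes $\E(E_n)$ via Lemma \ref{subgraphs} and uses that $X_n$ is the degree of a uniformly chosen vertex; your route is more direct and equally valid.
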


\begin{proof}
Note that if we have another random variable $\hat{X}_n$ with a different distribution on $\N_0$, we can apply \eqref{xrec} to it by defining, conditional on $\hat{X}_n$, $\hat{Y}_{n+1}\sim Bin(\hat{X}_n,\gamma)$ and $\hat{W}_{n+1}\sim Bin(\hat{X}_n,\alpha)$ using the same set of Bernoulli trials as for $Y_{n+1}$ and $W_{n+1}$ respectively, and letting $$\hat{X}_{n+1}=\xi_{n+1}\hat{X}_n+(1-\xi_{n+1})\hat{W}_{n+1}+
\hat{Y}_{n+1}+Z_{n+1},$$ in which case
$\E(|X_{n+1}-\hat{X}_{n+1}||\salj_n)=\half(2\gamma+\alpha+1)|X_{n}-\hat{X}_{n}|$,
so that we have a contraction in the Wasserstein metric if
$2\gamma+\alpha<1$. Hence in this case there is convergence in the
Wasserstein-$1$ metric of the degree
distributions to a unique fixed point with finite mean.

We can calculate
the mean of this distribution by using Lemma \ref{subgraphs}: letting $m=2$ we get $$\E(\hier[k]{n+1}2)=(2\gamma+\alpha+1)\hier[k]n2+2^n\beta v_0,$$ where $v_0$ is the number of vertices in the initial graph, and solving this we find that the expected
number of edges in $G_n$ is
$$\frac{\beta v_0(2^n-(1+2\gamma+\alpha)^n)}{1-2\gamma-\alpha},$$ so (as the number of vertices in $G_n$ is $2^n v_0$) the expected average
degree is
$$\frac{\beta(2^n-(1+2\gamma+\alpha)^n)}{2^{n-1}(1-2\gamma-\alpha)},$$ which converges to
$\frac{2\beta}{1-2\gamma-\alpha}$ as $n\to\infty$.
\end{proof}

To go further than this we use Foster-Lyapunov techniques, as described in Meyn and Tweedie \cite{meyntweedie} in the more general case of an uncountable state space.  The following lemma on the conditional moments of $X_{n+1}$ (including negative and fractional moments) will be useful.

\begin{lem}\label{moments}Let $p\in\R$.  Then as $x\to\infty$, $$\E\left(\left(\frac{1+x+Y_{n+1}+Z_{n+1}}{1+x}\right)^p|X_n=x\right)\to (1+\gamma)^p,$$ and
$$\E\left(\left(\frac{1+W_{n+1}+Y_{n+1}+Z_{n+1}}{1+x}\right)^p|X_n=x\right)\to (\alpha+\gamma)^p.$$
\end{lem}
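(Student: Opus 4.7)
The plan is to obtain the limits via the weak law of large numbers and then upgrade to convergence of the $p$-th powers by a uniform integrability argument. Conditional on $X_n=x$, we have $Y_{n+1}/x\to\gamma$ and $W_{n+1}/x\to\alpha$ in probability as $x\to\infty$ by the weak law of large numbers for binomials, while $Z_{n+1}\in\{0,1\}$ stays bounded. Consequently
\[
\frac{1+x+Y_{n+1}+Z_{n+1}}{1+x}\to 1+\gamma \quad\text{and}\quad \frac{1+W_{n+1}+Y_{n+1}+Z_{n+1}}{1+x}\to \alpha+\gamma
\]
in probability. The lemma then reduces to pulling the limit through $(\cdot)^p$ and through $\E$, i.e.\ to uniform integrability of the $p$-th powers.

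The key deterministic bounds are $0\leq Y_{n+1},W_{n+1}\leq x$ and $0\leq Z_{n+1}\leq 1$. These imply that both ratios lie in $[0,2]$ and, moreover, that the first ratio lies in $[1,2]$. Hence for $p\geq 0$ both integrands are bounded above by $2^p$, and for $p<0$ the first integrand is bounded above by $1$; in all of these cases the bounded convergence theorem delivers the result immediately.

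The one case requiring actual work is the second ratio when $p<0$, where the $p$-th power can be as large as $(1+x)^{-p}$; ruling out such pathological behaviour is the main obstacle. Assuming $\alpha+\gamma>0$, the variable $W_{n+1}+Y_{n+1}$ has mean $(\alpha+\gamma)x$, and a standard Chernoff bound gives
\[
P\left(W_{n+1}+Y_{n+1}\leq \tfrac12(\alpha+\gamma)x \,\big|\, X_n=x\right)\leq e^{-cx}
\]
for some $c>0$ depending on $\alpha$ and $\gamma$. I would split the expectation on this event: on the complementary ``good'' event the ratio is bounded below by a positive constant for all large $x$, so the integrand is uniformly bounded and bounded convergence applies; on the ``bad'' event the integrand is at most $(1+x)^{-p}$, which is polynomial in $x$ and is killed by the exponentially small probability. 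Combining the two contributions yields convergence to $(\alpha+\gamma)^p$, completing the proof. (The degenerate case $\alpha+\gamma=0$ with $p<0$ falls outside the argument but is compatible with the natural convention $0^p=+\infty$.)
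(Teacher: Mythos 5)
Your proof is correct, and it is organised rather differently from the paper's. The paper handles $p<0$ by citing Theorem 2.1 of Garc\'{i}a and Palacios on negative moments, and handles $p>0$ by adapting that theorem's truncation argument (splitting at $\mu_n\pm\mu_n^\delta$ and using the almost sure bound $A_n\leq k\mu_n$ off the truncation event). You instead give a self-contained argument: you first note the deterministic bounds $0\leq Y_{n+1},W_{n+1}\leq x$, $0\leq Z_{n+1}\leq 1$, which confine both ratios to $[0,2]$ and the first to $[1,2]$, so that every case except ($p<0$, second ratio) follows from the weak law of large numbers plus bounded convergence with no concentration estimate at all --- in particular you dispose of the first expression for $p<0$ by the trivial bound $\bigl(\text{ratio}\bigr)^p\leq 1$, where the paper invokes the cited theorem. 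For the one genuinely delicate case you supply exactly the missing ingredient, a Chernoff lower-tail bound on $W_{n+1}+Y_{n+1}$, and the good-event/bad-event split works: on the good event the integrand is uniformly bounded and converges in probability, and on the bad event the crude bound $(1+x)^{-p}$ is polynomial and is annihilated by the exponentially small probability. Your explicit caveat about $\alpha+\gamma=0$ with $p<0$ is a genuine (if harmless) boundary case of the lemma as stated, and does not arise where the lemma is applied, since the negative-$p$ application assumes $(1+\gamma)(\alpha+\gamma)>1$. What your route buys is a complete elementary proof with no external citation and a clear separation of the trivial cases from the one that needs concentration; what the paper's route buys is brevity and a pointer to a general statement covering a wider class of random variables than binomials.
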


\begin{proof}In the case where $p<0$ this is a special case of Theorem 2.1 of Garc\'{i}a and Palacios in \cite{negativemoments}.  When $p>0$ we can adapt the argument of Theorem 2.1 in \cite{negativemoments} with the additional condition that the random variables are bounded above by a constant multiple of their mean, which is satisfied in this case.  We assume that $(A_n)_{n\in\N}$ is a sequence of positive random variables with associated sequences of constants $\mu_n$ and $\sigma_n$ such that $A_n\leq k\mu_n$ with probability $1$ for some $k$ and for all $n$, and that the conditions of Theorem 2.1 in \cite{negativemoments} hold.

For the upper bound, \begin{eqnarray*}\E(Y_n^p) &=& \E(Y_n^pI_{\{Y_n<\mu_n+\mu_n^\delta\}})+\E(Y_n^pI_{\{Y_n\geq \mu_n+\mu_n^\delta\}}) \\ &\leq & (\mu_n+\mu_n^\delta)^p+(k\mu_n)^p\pr(Y_n\geq \mu_n+\mu_n^\delta,\end{eqnarray*} with order of magnitude $\mu_n^p$.

For the lower bound (which is obvious by Jensen's inequality if $p\geq 1$), $$\E(Y_n^p)\geq (\mu_n-\mu_n^{\delta})^p\pr(X_n>\mu_n-\mu_n^\delta),$$ again with order of magnitude $\mu_n^p$.
\end{proof}

\begin{prop}\label{posrec}If $(1+\gamma)(\alpha+\gamma)<1$, the Markov chain is positive recurrent, and thus the distribution of $X_n$ converges to a stationary distribution.
\end{prop}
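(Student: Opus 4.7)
The natural approach is a Foster--Lyapunov drift argument with test function $V(x)=(1+x)^p$ for a suitably small $p>0$. Writing the one-step expected increment of $V$ under the recursion \eqref{xrec},
\begin{equation*}
\E(V(X_{n+1})\mid X_n=x) = \tfrac{1}{2}\E\bigl((1+x+Y_{n+1}+Z_{n+1})^p\bigm|X_n=x\bigr) + \tfrac{1}{2}\E\bigl((1+W_{n+1}+Y_{n+1}+Z_{n+1})^p\bigm|X_n=x\bigr),
\end{equation*}
where the two terms correspond to $\xi_{n+1}=1$ (parent) and $\xi_{n+1}=0$ (child). Lemma \ref{moments} identifies the asymptotic ratio $\E(V(X_{n+1})\mid X_n=x)/V(x)\to \tfrac{1}{2}\bigl((1+\gamma)^p+(\alpha+\gamma)^p\bigr)$ as $x\to\infty$.

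The plan is then to observe that the function $f(p)=(1+\gamma)^p+(\alpha+\gamma)^p$ satisfies $f(0)=2$ and $f'(0)=\log((1+\gamma)(\alpha+\gamma))<0$ under the hypothesis $(1+\gamma)(\alpha+\gamma)<1$. Hence there exists $p_0>0$ such that $f(p_0)<2$, and with $\rho = \tfrac{1}{2}f(p_0)<1$ we can find $K<\infty$ with
\begin{equation*}
\E(V(X_{n+1})\mid X_n=x)\leq \tfrac{1+\rho}{2}\,V(x)\qquad\text{for all } x\geq K.
\end{equation*}
On the finite set $\{x<K\}$ the conditional expectation is trivially bounded, because each of $Y_{n+1}, W_{n+1}, Z_{n+1}$ is $\salj_n$-conditionally bounded by a constant. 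This gives the standard Foster--Lyapunov geometric drift condition of Meyn--Tweedie \cite{meyntweedie}.

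Combining the drift condition with irreducibility and aperiodicity of $(X_n)$ on $\N_0$ (which hold because $\beta>0$ and both $\alpha,\gamma<1$ are forced by the hypothesis), the Markov chain is positive recurrent, and the distribution of $X_n$ converges to its unique stationary distribution. The only step requiring genuine care is the transfer from the asymptotic statement of Lemma \ref{moments} to a uniform drift inequality valid for all $x\geq K$; this amounts to choosing $K$ so that the error in the limit in Lemma \ref{moments} is controlled, and is the main technical obstacle, though it is routine once one notes that the convergence in Lemma \ref{moments} is uniform in the tail in the sense needed.
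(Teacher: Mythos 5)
Your proposal is correct and follows essentially the same route as the paper: a Foster--Lyapunov drift argument with test function $x^p$ (the paper uses $V(x)=x^p$ and the drift condition $\Delta V(x)\le -1$ from Theorem 11.0.1 of \cite{meyntweedie}, rather than your geometric form, but these are interchangeable here since $V(x)\to\infty$), the same choice of $p$ via the sign of $\frac{\rmd}{\rmd p}\bigl((1+\gamma)^p+(\alpha+\gamma)^p\bigr)$ at $p=0$, and the same appeal to Lemma \ref{moments} for the asymptotic drift ratio. The ``transfer to a uniform inequality'' you flag as the main obstacle is immediate from the lemma as stated, since $p$ is fixed and only the $x\to\infty$ limit is needed.
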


\begin{proof}
This uses Theorem 11.0.1 of \cite{meyntweedie}.

We choose $p\in(0,1)$ such that $(1+\gamma)^p+(\alpha+\gamma)^p<2$.  Because $\frac{\rmd}{\rmd p}((1+\gamma)^p+(\alpha+\gamma)^p)$ is negative at $p=0$ if $\log(1+\gamma)+\log(\alpha+\gamma)<0$, it will be possible to find such a $p$ if $(1+\gamma)(\alpha+\gamma)<1$.

We now let $V(x)=x^p$.  In \cite{meyntweedie}, the drift $\Delta V(x)$ is defined as $$\Delta V(x)=\E(V(X_{n+1})-V(X_n)|X_n=x),$$ and by Theorem 11.0.1 of \cite{meyntweedie} the chain will be positive recurrent if (for some $V$) $\Delta V(x)\leq -1$ for $x$ large enough.  Now \begin{eqnarray*}\E(X_{n+1}^p|X_n=x) &=& \frac{x^p}{2}\left(\E\left(\left(1+
\frac{Y_{n+1}}{x}+\frac{Z_{n+1}}{x}
\right)^p|G_n\right)+\E\left(\left(\frac{W_{n+1}}{x}+
\frac{Y_{n+1}}{x}+\frac{Z_{n+1}}{x}
\right)^p|G_n\right)\right)\\ & \leq & \frac{x^p}{2}\left(\left(1+\gamma\right)^p+ \left(\alpha+\gamma\right)^p\right)+o(x^p) \\ & & \mbox{(by Lemma \ref{moments})},\end{eqnarray*} so $$\Delta V(x)\leq x^p\left(\frac{\left(1+\gamma\right)^p+ \left(\alpha+\gamma\right)^p}{2}-1\right)+o(x^p),$$ which will be less than $-1$ for $x$ large enough, giving the result.

\end{proof}

We now investigate the tail behaviour of the stationary distribution, in the case where Proposition \ref{posrec} shows one exists.

\begin{prop}\label{tail1}
Let $p>0$.  If $(1+\gamma)^p+(\alpha+\gamma)^p<2$, then a random variable $X$ with the stationary distribution of the chain has finite $p$th moment $\E(X^p)$, and we have convergence of $p$th moments, $\E(X_n^p)\to \E(X^p)$ as $n\to\infty$.
\end{prop}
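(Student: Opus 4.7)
The plan is to apply a Foster--Lyapunov argument with the test function $V(x)=x^p$, mirroring the structure of the proof of Proposition \ref{posrec} but with $p$ being the actual moment we wish to control rather than an auxiliary exponent less than $1$. By Lemma \ref{moments},
$$\E(X_{n+1}^p \mid X_n = x) \leq \frac{x^p}{2}\bigl((1+\gamma)^p + (\alpha+\gamma)^p\bigr) + o(x^p)$$
as $x\to\infty$. Setting $\rho=\tfrac{1}{2}\bigl((1+\gamma)^p+(\alpha+\gamma)^p\bigr)$, the hypothesis gives $\rho<1$, so for any $\rho'\in(\rho,1)$ the right-hand side is at most $\rho' x^p$ for all sufficiently large $x$. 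For small $x$ the crude bound $X_{n+1}\leq 2X_n+1$ (immediate from \eqref{xrec}) produces a uniform constant bound, and combining these two regimes yields a drift inequality $\E(X_{n+1}^p \mid X_n = x) \leq \rho' x^p + C$ valid for every $x\geq 0$.

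Iterating this inequality and taking expectations gives $\E(X_n^p) \leq (\rho')^n \E(X_0^p) + \tfrac{C}{1-\rho'}$. Since $G_0$ is finite, $X_0$ is bounded, so $\sup_n \E(X_n^p) < \infty$. Proposition \ref{posrec} provides convergence of $X_n$ in distribution to a random variable $X$ with the stationary law, and Fatou's lemma then yields $\E(X^p)\leq \liminf_n \E(X_n^p)<\infty$, giving the claimed finiteness of the $p$th moment of the stationary distribution.

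For the convergence of $p$th moments I would exploit the strictness of the hypothesis. The function $q\mapsto (1+\gamma)^q+(\alpha+\gamma)^q$ is continuous, so there is some $p'>p$ for which $(1+\gamma)^{p'}+(\alpha+\gamma)^{p'}<2$ still holds. Applying the argument above with $p'$ in place of $p$ gives $\sup_n \E(X_n^{p'})<\infty$, from which the Markov-type bound $\E(X_n^p \mathbf{1}_{\{X_n>K\}})\leq K^{-(p'-p)}\E(X_n^{p'})$ delivers uniform integrability of the family $\{X_n^p\}_{n\in\N}$. Combined with $X_n^p\to X^p$ in distribution (by the continuous mapping theorem), uniform integrability gives $\E(X_n^p)\to \E(X^p)$, as required.

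The main technical care lies in converting the pointwise asymptotic bound of Lemma \ref{moments} into a drift inequality holding uniformly in $x$, but this is routine once the small-state behaviour is handled via \eqref{xrec}. The only other essential ingredient is using the strict inequality in the hypothesis to open the gap $p'>p$, which is precisely what allows one to upgrade finiteness of moments to convergence of moments.
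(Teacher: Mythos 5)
Your proposal is correct, and the drift computation at its core (the bound $\E(X_{n+1}^p\mid X_n=x)\leq \tfrac{x^p}{2}((1+\gamma)^p+(\alpha+\gamma)^p)+o(x^p)$ from Lemma \ref{moments}, patched on bounded states via $X_{n+1}\leq 2X_n+1$ from \eqref{xrec}) is exactly the estimate the paper uses. Where you diverge is in how the drift bound is converted into the two conclusions. The paper feeds the drift inequality, with $V(x)=kx^p$ and $f(x)=x^p+1$, directly into the $f$-norm ergodic theorem (Theorem 14.0.1 of Meyn and Tweedie), which in one stroke yields both finiteness of $\int f\,d\pi$ and convergence of $\E(f(X_n))$ to it. You instead iterate the geometric drift inequality to get $\sup_n\E(X_n^p)<\infty$ by hand, obtain finiteness of $\E(X^p)$ from Fatou's lemma together with the distributional convergence supplied by Proposition \ref{posrec}, and then upgrade to convergence of moments by opening the gap to some $p'>p$ (legitimate, since $q\mapsto(1+\gamma)^q+(\alpha+\gamma)^q$ is continuous and the hypothesis is strict) to get uniform integrability of $\{X_n^p\}$. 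Both routes are sound; note also that your implicit reliance on Proposition \ref{posrec} is justified because $(1+\gamma)^p+(\alpha+\gamma)^p<2$ forces $(1+\gamma)(\alpha+\gamma)<1$ by AM--GM. Your argument is more elementary and self-contained, at the cost of the extra bootstrapping step; the paper's citation of the ergodic theorem is shorter and avoids any appeal to a higher moment, but imports a substantial black box.
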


\begin{proof}
Again this uses a Foster-Lyapunov type technique, in this case Theorem 14.0.1 of \cite{meyntweedie} which states that if, for a given function $f\geq 1$, we can find $V$ such that $\Delta V(x)<-f(x)$ for $x$ large enough then $f$ has a finite integral with respect to the stationary distribution and that $\E(f(X_n))$ converges to this integral.  We will set $f(x)=x^p+1$.

Let $V(x)=kx^p$, where $k$ is chosen so that $$k\left(\frac{\left(1+\gamma\right)^p+ \left(\alpha+\gamma\right)^p}{2}-1\right)<-1.$$  Then, by Lemma \ref{moments}, $$\Delta V(x)\leq kx^p\left(\frac{\left(1+\gamma\right)^p+\left(\alpha+\gamma\right)^p}{2}-1\right)+o(x^p),$$ and so $\Delta V(x)\leq -f(x)$ for $x$ large enough, giving the result.
\end{proof}

\begin{prop}\label{tail2}
Let $p>0$.  If $(1+\gamma)^p+(\alpha+\gamma)^p>2$, then a random variable $X$ with the stationary distribution of the chain does not have finite $p$th moment $\E(X^p)$.
\end{prop}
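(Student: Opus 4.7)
I would argue by contradiction, assuming $\E(X^p)<\infty$ under the stationary distribution $\pi$ and extracting a contradiction from the recursion (\ref{xrec}) combined with Lemma \ref{moments}.

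For $p\geq 1$ the cleanest route is Jensen. Conditioning on $X_n$ and on $\xi_{n+1}$, the parent case ($\xi_{n+1}=1$) gives
$\E(X_{n+1}^p\mid X_n,\xi_{n+1}=1)=\E((X_n+Y_{n+1}+Z_{n+1})^p\mid X_n)\geq((1+\gamma)X_n+\beta)^p\geq(1+\gamma)^p X_n^p$,
and similarly the child case gives $\E(X_{n+1}^p\mid X_n,\xi_{n+1}=0)\geq(\alpha+\gamma)^p X_n^p$. Averaging over $\xi_{n+1}$ and setting $\rho=\half((1+\gamma)^p+(\alpha+\gamma)^p)$, which is $>1$ by hypothesis, I obtain $\E(X_{n+1}^p\mid X_n)\geq\rho X_n^p$. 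Taking expectations under $\pi$ gives $\E(X^p)\geq\rho\E(X^p)$, and since $\E(X^p)$ is assumed finite and non-negative while $\rho>1$, this forces $\E(X^p)=0$, contradicting the fact that $\pi$ (which puts positive mass on positive degrees, because $\beta>0$ and the chain is irreducible) has strictly positive $p$th moment.

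For $p\in(0,1)$ Jensen runs the wrong way and this direct argument breaks. Lemma \ref{moments} still supplies the asymptotic $\E(X_{n+1}^p\mid X_n=x)\geq(\rho-\epsilon)x^p$ for $x\geq M_\epsilon$, but substituting this into $\E(X^p)=\E(\E(X_{n+1}^p\mid X_n))$ only yields a (finite) upper bound on $\E(X^p)$, because the contribution from $x<M_\epsilon$ absorbs a bounded positive correction and no contradiction emerges. I would resolve this regime by passing to the multiplicative picture of the chain: for large $X_n$, conditional on $\xi_{n+1}$, $X_{n+1}\approx A\cdot X_n$ with $A=1+\gamma$ or $A=\alpha+\gamma$ each with probability $\half$, so $(X_n)$ is effectively a Kesten-style random recursion. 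An implicit renewal argument in the Kesten-Goldie spirit then produces a lower bound $\pi(X>N)\gtrsim N^{-\tau^*}$ as $N\to\infty$, where $\tau^*$ is the unique positive root of $(1+\gamma)^{\tau^*}+(\alpha+\gamma)^{\tau^*}=2$; since the hypothesis $(1+\gamma)^p+(\alpha+\gamma)^p>2$ is exactly $p>\tau^*$, this tail lower bound gives $\E(X^p)=\infty$.

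The main obstacle is the $p<1$ case. The reverse Foster-Lyapunov calculation that handled Propositions \ref{posrec} and \ref{tail1} does not generate a contradiction here, and a genuine tail estimate on $\pi$ obtained via the multiplicative/renewal picture seems necessary rather than just a drift bound on $V(x)=x^p$.
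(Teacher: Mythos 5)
Your argument for $p\geq 1$ is correct and is a clean, elementary alternative to what the paper does: Jensen applied separately to the parent and child branches of \eqref{xrec} gives $\E(X_{n+1}^p\mid X_n)\geq\rho X_n^p$ for \emph{all} states, and feeding this into the stationarity identity forces $\E(X^p)=0$, which is absurd. You are also right that this identity-based argument genuinely fails for $p\in(0,1)$: the bound $\E(X_{n+1}^p\mid X_n=x)\geq(\rho-\epsilon)x^p$ only for $x\geq M_\epsilon$ yields $m\leq(\rho-\epsilon)M_\epsilon^p/(\rho-\epsilon-1)$, a finite upper bound and no contradiction. The gap is in how you propose to repair it. The Kesten--Goldie route is only asserted, not executed, and it is not a routine application here: \eqref{xrec} is not an affine recursion $X_{n+1}=A_{n+1}X_n+B_{n+1}$ with $(A_{n+1},B_{n+1})$ independent of $X_n$, since $Y_{n+1}$ and $W_{n+1}$ are binomials whose number of trials is $X_n$ itself. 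One would need Goldie's implicit renewal theorem, verification of its moment and non-lattice hypotheses, and --- crucially for your purpose --- a proof that the constant in the tail asymptotic $\pi(X>N)\sim CN^{-\tau^*}$ is strictly positive; that positivity is exactly the hard part and is not supplied by the drift estimates you have in hand. As written, the $p<1$ case is unproved.

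The missing idea is to run the drift bound \emph{along the dynamics} rather than through the stationary identity. Lemma \ref{moments} (dropping $Z_{n+1}\geq 0$) gives $\E(X_{n+1}^p\mid X_n=x)\geq\rho' x^p$ for $x\geq M$ with $\rho'>1$, hence $\E_x(X_{n+1}^p)\geq\rho'\E_x(X_n^p)-\rho'M^p$; started from a sufficiently large state $x$ this recursion forces $\E_x(X_n^p)\to\infty$. On the other hand, Theorem 14.0.1 of Meyn and Tweedie (the $f$-norm ergodic theorem with $f(x)=x^p+1$, the same tool used for Proposition \ref{tail1}) says that if $\pi(f)<\infty$ then $\E_x(f(X_n))\to\pi(f)<\infty$ for all $x$ in a full set, which by irreducibility contains arbitrarily large states. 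The two conclusions are incompatible, so $\pi(x^p)=\infty$. This argument is uniform in $p>0$, needs no case split at $p=1$, and avoids renewal theory entirely; it is what the paper's (admittedly terse) proof is doing.
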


\begin{proof}
As $Z_{n+1}\geq 0$, we have $$\E(X_{n+1}^p|X_n=x)\geq \frac{x^p}{2}\left(\E\left(\left(1+\frac{Y_{n+1}}{x}\right)^p|X_n=x\right)+\E\left(\left( \frac{W_{n+1}}{x}+\frac{Y_{n+1}}{x}\right)^p|X_n=x\right)\right),$$ so by Lemma \ref{moments} $$\E(X_{n+1}^p|X_n=x)\geq \frac{x^p}{2}\left((1+\gamma)^p+(\alpha+\gamma)^p\right)+o(x^p).$$  Hence the $p$th moment of $X_n$ tends to infinity as $n\to\infty$, so by Theorem 14.0.1 of \cite{meyntweedie}, again applied to $f(x)=x^p+1$, the stationary distribution cannot have a finite $p$th moment.
\end{proof}

\begin{prop}If $\beta>0$ and $(1+\gamma)(\alpha+\gamma)>1$ the Markov chain is transient.\end{prop}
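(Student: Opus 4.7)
The plan is to apply a Foster--Lyapunov-style transience criterion, in the same spirit as Propositions \ref{posrec} and \ref{tail1}, but now with a \emph{bounded} Lyapunov function that rewards large values of $X_n$. By (for instance) Theorem 8.4.2 of \cite{meyntweedie}, transience of the chain $(X_n)$ will follow once one exhibits a bounded non-negative function $V$ and a finite set $C$ such that $\E(V(X_{n+1})\mid X_n=x)\geq V(x)$ for every $x\in C^c$, together with some $x_0\in C^c$ with $V(x_0)>\sup_{y\in C}V(y)$. The chain is irreducible under our standing assumption $\beta>0$ as long as $\alpha<1$ and $\gamma<1$; in the remaining cases $\alpha=1$ or $\gamma=1$ the recursion \eqref{xrec} is monotone non-decreasing and the conclusion $X_n\to\infty$ is immediate without any Lyapunov argument, so I restrict attention to $\alpha,\gamma<1$.

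I would take $V(x)=1-(1+x)^{-q}$ for a small exponent $q>0$, chosen by the same derivative trick used in Proposition \ref{posrec}: the function $q\mapsto (1+\gamma)^{-q}+(\alpha+\gamma)^{-q}$ equals $2$ at $q=0$ and has derivative $-\log((1+\gamma)(\alpha+\gamma))$ there, which is strictly negative because of the standing hypothesis $(1+\gamma)(\alpha+\gamma)>1$. Hence there exists $q>0$ with $(1+\gamma)^{-q}+(\alpha+\gamma)^{-q}<2$, and I fix such a $q$.

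To check the submartingale inequality outside a finite set, I would apply Lemma \ref{moments} with $p=-q$ to each of the two cases $\xi_{n+1}=1$ and $\xi_{n+1}=0$ in the recursion \eqref{xrec}; averaging the two limits yields
$$\frac{\E((1+X_{n+1})^{-q}\mid X_n=x)}{(1+x)^{-q}}\longrightarrow \tfrac{1}{2}\bigl((1+\gamma)^{-q}+(\alpha+\gamma)^{-q}\bigr)<1$$
as $x\to\infty$. Rearranging, $\E(V(X_{n+1})\mid X_n=x)\geq V(x)$ for every $x$ beyond some threshold $N$, so one may take $C=\{0,1,\ldots,N\}$. Since $V$ is bounded and strictly increasing, every sufficiently large $x_0$ satisfies $V(x_0)>V(N)=\max_{y\in C}V(y)$, and the transience criterion applies.

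The only point that requires care is confirming that Lemma \ref{moments} may legitimately be invoked with the negative exponent $p=-q$; however that lemma is stated for arbitrary $p\in\R$, with the $p<0$ case covered directly by García and Palacios \cite{negativemoments}, so this step is immediate. Handling $X_n=0$ is a trivial bookkeeping matter absorbed into the finite set $C$ (where $V(0)=0$), and so there is no genuine obstacle beyond assembling these ingredients.
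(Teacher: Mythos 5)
Your proposal is correct and follows essentially the same route as the paper: the same Lyapunov function $V(x)=1-(1+x)^p$ with $p=-q<0$ chosen so that $(1+\gamma)^p+(\alpha+\gamma)^p<2$, the same appeal to Lemma \ref{moments} for negative exponents, and the same Meyn--Tweedie transience criterion. The only differences are cosmetic: you spell out the derivative argument for the existence of $q$ and the treatment of the degenerate cases $\alpha=1$ or $\gamma=1$, which the paper handles implicitly or elsewhere.
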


\begin{proof}
By Lemma \ref{moments}, \begin{equation}\label{negmom}\frac{E\left(\left(1+X_{n+1}\right)^p|X_n=x\right)}{(1+x)^p}\to \frac{(1+\gamma)^p+(\alpha+\gamma)^p}{2},\end{equation} so we apply Theorem 8.0.2 (i) of \cite{meyntweedie} with $V(x)=1-(1+x)^p$ for some $p<0$ such that $(1+\gamma)^p+(\alpha+\gamma)^p<2$.  With this choice of $V$, \eqref{negmom} shows that $\Delta V(x)>0$ for $x$ large enough, and as $V$ is bounded and positive on the natural numbers Theorem 8.0.2 (i) of \cite{meyntweedie} gives the result.

\end{proof}

The case where $\beta=0$ is something of a special case as the chain is not irreducible.  However we can show that when $(1+\gamma)(\alpha+\gamma)\leq 1$ the probability that a randomly chosen vertex is isolated tends to $1$, while there is positive probability that a randomly chosen vertex is not isolated when $(1+\gamma)(\alpha+\gamma)>1$.

\begin{prop}\label{beta0p}
If $\beta=0$, then \begin{enumerate} \item if $(1+\gamma)(\alpha+\gamma)\leq 1$ then almost surely $X_n=0$ for $n$ sufficiently large, and the proportion of isolated vertices in $G_n$ tends to $1$ almost surely as $n\to\infty$;\item if $(1+\gamma)(\alpha+\gamma)> 1$ then there is $q>0$ such that the probability that $X_n\to \infty$  as $n\to\infty$ is $q$ and the probability that $X_n\to 0$ as $n\to \infty$ is $1-q$.\end{enumerate}
\end{prop}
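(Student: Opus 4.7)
The key structural feature when $\beta=0$ is that an isolated vertex only produces isolated descendants: if $v\in V(G_n)$ has degree zero then every potential edge from $v1$ or $v0$ in $G_{n+1}$ given by rules (a), (c), (d) requires a neighbour of $v$ in $G_n$, and rule (b) contributes nothing because $\beta=0$. So first I would exploit this monotonicity: the number of isolated vertices at least doubles from $G_n$ to $G_{n+1}$ while $V_{n+1}=2V_n$, so $\hier[p]{n}{0}$ is non-decreasing in $n$ almost surely and hence converges. Since $v_n$ is uniform on $V(G_n)$ we have $\E(\hier[p]{n}{0})=\pr(X_n=0)$, so identifying the limit reduces to analysing the Markov chain $(X_n)$.

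The main tool will be to reinterpret the edges at $v_n$ via a branching process in an i.i.d.\ random environment, the environment being the sequence $(\xi_n)$ of parent/child choices. Fix one edge $\{v_0,u\}$ of $G_0$ incident to $v_0$ and track its descendant edges at $v_n$ in $G_n$. Conditional on $\xi_{n+1}=1$, every edge at $v_n$ produces $1+\mathrm{Ber}(\gamma)$ offspring edges at $v_{n+1}$ (mean $1+\gamma$), and conditional on $\xi_{n+1}=0$ it produces $\mathrm{Ber}(\gamma)+\mathrm{Ber}(\alpha)$ offspring (mean $\alpha+\gamma$); distinct edges have conditionally independent offspring given $\xi_{n+1}$. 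Thus $X_n$ is the sum of $X_0$ BPREs driven by the same environment, and the expected logarithmic offspring mean is $\E[\log m]=\tfrac{1}{2}\log((1+\gamma)(\alpha+\gamma))$.

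Next I would invoke the classical Athreya--Karlin / Smith--Wilkinson extinction theorem: each individual BPRE becomes extinct almost surely when $\E[\log m]\le 0$ and survives with positive probability when $\E[\log m]>0$, with surviving trajectories growing to infinity. Part (1) follows directly: $X_n\to 0$ almost surely, so $\pr(X_n=0)\to 1$, and the monotonicity argument upgrades this to $\hier[p]{n}{0}\to 1$ almost surely. For part (2) supercriticality gives $\pr(X_n\to\infty)\ge q>0$ whenever $X_0\ge 1$; to identify the complementary event $\{X_n\to 0\}$ I would use a dichotomy argument: if $X_n\not\to\infty$ then $\liminf X_n<\infty$, so $X_n$ returns to some bounded set $\{1,\ldots,K\}$ infinitely often, but from each such state the one-step probability of being absorbed at $0$ is at least $\tfrac{1}{2}(1-\alpha)^K(1-\gamma)^K>0$, so Borel--Cantelli forces absorption at $0$.

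The hard part will be the critical case $(1+\gamma)(\alpha+\gamma)=1$ of part (1). A direct Foster--Lyapunov attempt with $V(x)=x^p$ is doomed: the convex function $p\mapsto(1+\gamma)^p+(\alpha+\gamma)^p$ attains its minimum value $2$ at $p=0$ precisely on the critical curve, so Lemma \ref{moments} delivers non-negative asymptotic drift for every power, and a Jensen-based analysis of $V(x)=\log(1+x)$ also goes the wrong way, since on the critical curve $\alpha+2\gamma=1+\gamma^2/(1+\gamma)>1$. Invoking the BPRE extinction theorem sidesteps this cleanly; checking its non-degeneracy hypothesis requires only that, under the standing assumption $\alpha,\gamma<1$, the offspring law under $\xi_{n+1}=0$ is not almost surely equal to $1$, which is immediate.
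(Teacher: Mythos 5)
Your proposal is correct and takes essentially the same route as the paper: both identify $(X_n)$ (with $\beta=0$) as a Smith--Wilkinson branching process in the i.i.d.\ random environment $(\xi_n)$, apply the classical extinction criterion $\E[\log m]=\tfrac{1}{2}\log((1+\gamma)(\alpha+\gamma))$, and upgrade $\pr(X_n=0)\to 1$ to almost sure convergence of the isolated-vertex proportion via the monotonicity of that proportion. Your extra details (the explicit Borel--Cantelli dichotomy for part (2) and the remark on why Foster--Lyapunov fails at criticality) are sound elaborations of steps the paper leaves implicit.
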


\begin{proof}
We note that $(X_n)$ follows a Smith-Wilkinson branching process in random environment, \cite{swbpre}.  The environmental variables which determine the random environment are the random variables $\xi_n$, with the offspring distribution of the branching process at time $n$ having mean $1+\gamma$ if $\xi_{n+1}=1$ and $\alpha+\gamma$ if $\xi_{n+1}=0$.  Hence, by Theorem 3.1 of \cite{swbpre}, the branching process dies out with probability $1$ if $\half\log(1+\gamma)+\half\log(\alpha+\gamma)\leq 0$, i.e. if $(1+\gamma)(\alpha+\gamma)\leq 1$, and the branching process dies out with probability strictly less than $1$ otherwise, hence there is positive probability that $X_n\to \infty$  as $n\to\infty$.  To see that the proportion of isolated vertices tends to $1$ almost surely when $(1+\gamma)(\alpha+\gamma)\leq 1$, note that the proportion of isolated vertices is increasing (as if a vertex $v$ is isolated in $G_n$ both $v0$ and $v1$ are isolated in $G_{n+1}$) and therefore must converge to some value, which cannot be less than $1$ as the degree of a random vertex converges to zero almost surely.
\end{proof}

\begin{prop}\label{as}
\begin{description}
\item[(a)]
If $\beta>0$ and $(1+\gamma)(\alpha+\gamma)<1$, the degree distribution of the graph converges to the stationary distribution of the Markov chain in the sense that if we let $\hier[p]{n}{d}$ be the proportion of vertices in $G_n$ with degree $d$, and let $X$ be a random variable with the stationary distribution of the Markov chain, then $\hier[p]{n}{d}\to P(X=d)$ as $n\to\infty$, almost surely, for all $d\in\N_0$.
\item[(b)] If $\beta>0$ and $(1+\gamma)(\alpha+\gamma)>1$ then $\hier[p]{n}{d}\to 0$ as $n\to\infty$, almost surely, for all $d\in\N_0$.
\end{description}
\end{prop}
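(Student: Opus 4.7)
The plan is to exploit the tight connection between the empirical degree distribution $p_n^{(\cdot)}$ on $V(G_n)$ and the Markov kernel $K$ of the chain $(X_n)$ introduced above. The central identity is that, given $\salj_n$, each vertex $v \in V(G_n)$ of degree $x$ produces two children $v1, v0 \in V(G_{n+1})$ whose degrees have exactly the conditional distributions of $X_{n+1}$ given $(X_n, \xi_{n+1}) = (x, 1)$ and $(x, 0)$, respectively. Averaging over the choice of child gives
\[ \E\bigl[p_{n+1}^{(d)} \bigm| \salj_n\bigr] = \sum_{x \geq 0} p_n^{(x)} K(x,d), \]
so the empirical distribution satisfies $p_{n+1} = p_n K + \eta_{n+1}$, where $\eta_{n+1}$ is a conditionally mean-zero fluctuation.

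For part (a), Proposition \ref{posrec} guarantees that $K$ has unique stationary distribution $\pi = \mathrm{Law}(X)$, and the Foster--Lyapunov bounds of Propositions \ref{posrec} and \ref{tail1} give a geometric contraction of $K$ toward $\pi$ in an appropriate $V$-weighted norm (with $V(x)=x^p$ for a suitably chosen $p \in (0,1)$). The fluctuation $\eta_{n+1}^{(d)}$ is an average of $V_{n+1}$ bounded indicators; two such indicators are uncorrelated unless the underlying parent vertices in $V(G_n)$ are adjacent, so the induced dependency graph gives a conditional variance bound of order $(V_n+E_n)/V_{n+1}^2$, which is summable in $n$ in every sub-regime of (a) by the growth rates for $E_n$ established in Theorem \ref{dense}. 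Iterating $p_{n+1}-\pi = (p_n-\pi)K + \eta_{n+1}$, combining the geometric contraction of $K$ with the summable fluctuations, and applying Borel--Cantelli then yields the almost-sure convergence $p_n^{(d)} \to \pi(d)$.

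For part (b), transience of the chain (shown just above) gives $K^n(x,d) \to 0$ for every $x$ and $d$, so the deterministic part of the iteration already drives $\E[p_n^{(d)}]$ to zero. The same fluctuation bound as in (a) remains valid, and combined with the non-negativity of $p_n^{(d)}$ and a standard Markov--Borel--Cantelli argument (applied along a sufficiently fast-growing subsequence and then interpolated using the crude bound $|p_{n+1}^{(d)} - p_n^{(d)}| \leq 1$), this yields the a.s.\ conclusion $p_n^{(d)} \to 0$.

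The main obstacle is establishing the required geometric contraction of $K$ across the full parameter range of (a): the straightforward Wasserstein-$1$ coupling used in the proof of Proposition \ref{posrec} requires the stronger condition $\alpha+2\gamma<1$, whereas here one only assumes $(1+\gamma)(\alpha+\gamma)<1$. Working instead with the weighted norm $\|\mu\|_V = \sum_x V(x)|\mu(x)|$ and tracking how $K$ acts on $(p_n - \pi)$ under this norm---together with keeping the variance bound usable when $E_n$ grows faster than $V_n$---will form the technical core of the proof.
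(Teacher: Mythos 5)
Your route is genuinely different from the paper's, and its skeleton --- the identity $\E[\hier[p]{n+1}{d}\mid\salj_n]=\sum_x \hier[p]{n}{x}K(x,d)$ and the dependency-graph variance bound of order $(V_n+E_n)/V_{n+1}^2$ --- is sound. But the two steps you defer are exactly where the argument is incomplete, and one of them as described would fail. For (a), the contraction of $K$ is actually the easy part: the computation in Proposition \ref{posrec} already gives a geometric drift condition $\Delta V(x)\le -\epsilon V(x)+b\mathbf{1}_C(x)$ with $V(x)=1+x^p$, so $V$-geometric ergodicity follows from Theorem 15.0.1 of \cite{meyntweedie}. The genuine difficulty is that your fluctuation control is pointwise in $d$, whereas the iteration $p_n-\pi=(p_0-\pi)K^n+\sum_{j\le n}\eta_jK^{n-j}$ needs $\|\eta_j\|$ to tend to zero in a norm on zero-mass signed measures in which $K^m$ contracts; that means summing $V(d)|\eta_j(d)|$ over all $d$ up to the maximal degree of $G_j$ and showing the total is a.s.\ small. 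Nothing in the proposal addresses that summation over $d$, and it is not a routine consequence of the per-$d$ variance bound.

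For (b), Markov's inequality plus $\E[\hier[p]{n}{d}]\to 0$ gives only convergence in probability. To upgrade along a sparse subsequence you need either summability of $\E[\hier[p]{n}{d}]$ --- i.e.\ a quantitative decay rate for $P(X_n\le d)$ for the transient chain, which you have not established --- or a usable interpolation inequality between subsequence times; the crude bound $|\hier[p]{n+1}{d}-\hier[p]{n}{d}|\le 1$ gives nothing, and there is no monotonicity to exploit since a child $v0$ of a high-degree vertex can land at degree $d$. The paper sidesteps both issues with a different decomposition: fix $r$ and view $G_{r+s}$ as $2^rV_0$ independent families, each evolving as a copy of the process started from a single isolated vertex, so that for fixed $s$ the within-family degree profile converges a.s.\ as $r\to\infty$ (Chebyshev/Borel--Cantelli over independent families) to $P(X_s=\cdot)$. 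For (b) the true degree dominates the within-family degree, so $\limsup_n\hier[p]{n}{d}\le P(X_s\le d)$ for every $s$, which suffices; for (a) the cross-family edges are killed off by coupling with the $\beta=0$ process and invoking Proposition \ref{beta0p}. To salvage your route you would need to supply the weighted-norm control of $\eta_j$ and a decay rate for $P(X_n\le d)$ in the transient case; neither is present.
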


\begin{proof}
The graph at stage $r$ contains $2^rv_0$ vertices.  We then consider the edges of $G_{r+s}$ in two sets: those which are between descendants of the same vertex in $G_r$, and those which are between descendants of different vertices in $G_r$.  For the former, the appearance of edges between descendants of one given vertex is independent of what happens to the descendants of the other vertices, so we can model these edges of $G_{r+s}$ as consisting of $2^rv_0$ independent copies of $\tilde{G}_s$, where $(\tilde{G}_n)_{n\in\N}$ represents the process as evolved from a single vertex with no edges, $\tilde{G}_0$.  Then, by Chebyshev's inequality and a Borel-Cantelli argument, as $r\to\infty$ proportions of vertices in $G_{r+s}$ with degree $d$ excluding connections to descendants of different vertices in $G_r$ converge, almost surely, to $P(X_s=d)$.  In the $(1+\gamma)(\alpha+\gamma)>1$ case we know that $P(X_s=d)\to 0$ as $s\to\infty$ for all $d$, and the actual degree of a vertex is bounded below by the degree excluding some connections, so this is enough to prove (b).

To complete the proof of (a), we need to consider edges between vertices which are descendants of different vertices in $G_r$.  We couple the process, starting from $G_r$, with a process with $\beta=0$ by removing all edges between a vertex and its offspring, and all edges descended from such edges.  The edges thus removed from $G_{r+s}$ will all be between vertices descended from the same vertex in $G_r$, so all edges between vertices descended from different vertices in $G_r$ are present in the $\beta=0$ version.  But by Proposition \ref{beta0p} the proportion of vertices in $G_{r+s}$ which have non-zero degree in the $\beta=0$ version tends to zero as $s\to\infty$, and so this also applies to the proportion of vertices in $G_{r+s}$ which have edges connecting them to vertices with a different ancestor in $G_r$.

Hence as both $r$ and $s$ $\to\infty$ the proportion which have degree $d$ converges to $P(X_s=d)$.
\end{proof}

Finally we can put the Propositions above together to deduce Theorems \ref{beta0} and \ref{betanot0}.

\textit{Proof of Theorem \ref{beta0}.} Theorem \ref{beta0} follows from Proposition \ref{beta0p}; in the supercritical case where $(1+\gamma)(\alpha+\gamma)>1$ the probability that a randomly chosen vertex in the graph is isolated tends to $1-q<1$.

\textit{Proof of Theorem \ref{betanot0}.}  Theorem \ref{betanot0} follows immediately from Propositions \ref{posrec}, \ref{tail1}, \ref{tail2} and \ref{as}.

\bibliographystyle{plain}
\bibliography{js}

\end{document}